\title{Hoffman colorability of (strongly) regular graphs}
\author{Aida Abiad\thanks{\texttt{a.abiad.monge@tue.nl},  Department of Mathematics and Computer Science, Eindhoven University of Technology, The Netherlands}\thanks{Department of Mathematics and Data Science of Vrije Universiteit Brussel, Belgium}\qquad Bart De Bruyn\thanks{\texttt{Bart.DeBruyn@ugent.be}, Department of Mathematics, Computer Science and Statistics, Ghent University, Belgium}  \qquad Thijs van Veluw\thanks{\texttt{Thijs.vanVeluw@ugent.be}, Department of Mathematics, Computer Science and Statistics, Ghent University, Belgium}\thanks{Department of Mathematics and Computer Science, Eindhoven University of Technology, The Netherlands}}
\date{}
\begin{document}

\maketitle

\begin{abstract}
Hoffman's bound is a well-known eigenvalue bound on the chromatic number of a graph. By interpreting this bound as a parameter, we show multiple applications of colorings attaining the bound (Hoffman colorings) for several notions of graph regularity: regular, (co-)edge-regular, and strongly regular. For strongly regular graphs, we prove that Hoffman colorability implies pseudo-geometricity, and we strengthen Haemers' finiteness result on strongly regular graphs with a bounded chromatic number by considering the Hoffman bound instead of the chromatic number. Furthermore, by using Hoffman colorings we show that a sufficient condition for non-unique vector colorability shown by Godsil, Roberson, Rooney, \v{S}\'amal and Varvitsiotis  [European J. Combin. 79, 2019] can be relaxed in the setting of strongly regular graphs. Lastly, using Hoffman colorings we derive several new characterizations of the mentioned graph regularity notions. \\

\noindent \textbf{Keywords:} chromatic number, adjacency matrix, eigenvalues, Hoffman coloring, strongly regular graph 
\end{abstract}

\section{Introduction}

Let $G$ be a simple graph. Write $\y_{\max}$, $\y_{\min}$ for the largest and the smallest eigenvalue respectively of the adjacency matrix of $G$. Then the \emph{Hoffman bound} \cite{Hoffman} states that the chromatic number of $G$ is bounded below by
\begin{equation}\label{eq:Hoffmanbound}
h(G) \coloneqq 1-\frac{\y_{\max}(G)}{\y_{\min}(G)},
\end{equation}
which we will refer to as the \emph{Hoffman number}. A graph for which the Hoffman bound is tight is called \emph{Hoffman colorable}, and every optimal coloring of such a graph is called a \emph{Hoffman coloring}. Bipartite graphs and regular complete multipartite graphs are easily seen to be Hoffman colorable, and we will refer to their optimal colorings as \emph{trivial Hoffman colorings}.

In \cite{Abiad,previouspaper}, Hoffman colorings of general graphs (also including irregular graphs) were studied, and several structural requirements on Hoffman colorings were obtained. For strongly regular graphs, Hoffman colorings are dual to the notion of a \emph{spread}, which was investigated in \cite{spreads}, where spreads (and in turn Hoffman colorings) of strongly regular graphs are shown to correspond to fans in partial geometries, imprimitive 3-class association schemes, and systems of linked symmetric designs. In \cite{Beers, CHP}, equality in an analogous Hoffman-type bound but for the normalized Laplacian matrix was studied.

In this paper, we further investigate Hoffman colorability for several graph regularity notions. In particular, we focus on regular, co-edge-regular and strongly regular graphs.

First, we show that Hoffman colorable strongly regular graphs and their complements are pseudo-geometric (\thref{thm:HCpseudoG}). Moreover, we show that there are only finitely many primitive strongly regular graphs with a bounded Hoffman number (\thref{thm:boundedHoffman}), strengthening the known result \cite{ProefschriftHaemers} that there are only finitely many primitive strongly regular graphs with a bounded chromatic number. We make this explicit for primitive strongly regular graphs with Hoffman number at most 3 by completely determining all six such graphs (\thref{cor:h<3}).

Next we investigate the interplay between unique vector colorability (which was introduced in \cite{PakVilenchik} and studied with respect to cores in \cite{ChiVector}) and Hoffman colorability. In particular, we show that non-trivially Hoffman colorable connected 2-walk-regular graphs (including primitive strongly regular graphs) are not uniquely vector colorable (\thref{thm:UVCnotHC}). This improves \cite[Theorem 3.12]{ChiVector} for strongly regular graphs by providing a more general sufficient condition for non-unique vector colorability that also includes the graphs of type A of the type system of \cite{SRGcores}, like the Shrikhande and Schläfli graphs.

Furthermore, with the Hoffman number in mind, we expand on the results and ideas of \cite{Neumaier}, where average parameters based on strongly regular graphs are used to introduce an upper bound on the least eigenvalue of a regular graph \cite[Theorem 2.14(a)]{Neumaier}. In particular, we show that for regular graphs it holds that
\begin{equation}\label{eq:hhc<n}
    h(G)h(\ol G)\le n,
\end{equation}
and that strongly regular graphs are precisely those regular graphs attaining equality (\thref{thm:characterization}). 
The inequality (2) is related to the Lov\'asz  $\vartheta$-number (Remark 5.5). We also obtain a lower bound on the chromatic number of a co-edge-regular graph that improves Hoffman's bound for this graph class (\thref{thm:co-edge}). As an application of these results to Hoffman colorable regular graphs, it follows that graphs that are co-edge-regular but not strongly regular cannot be Hoffman colorable, and that strictly Neumaier graphs are not Hoffman colorable (\thref{cor:sufficient}).

The rest of the paper is organized as follows. In Section \ref{sec:preliminaries} we recall some definitions and   results that we will use throughout the paper. In Section \ref{sec:geompara}, we introduce the geometric parameters for strongly regular graphs and derive the aforementioned results. In Section \ref{sec:UVC} we investigate the interplay between Hoffman colorability and unique vector colorability. Lastly, in Section \ref{sec:characterization}, we cover the average parameters from \cite{Neumaier} and show the aforementioned results.

\section{Preliminaries}\label{sec:preliminaries}
In this section we establish the notation, definitions and introduce some preliminary results that will be used in our proofs. Throughout, we write $n$ for the order of a graph and, if it is regular, $k$ for the valency. For the vertex set of a graph $G$ we write $V(G)$, and similarly for the edge set $E(G)$. We write $A$ for the adjacency matrix.

\subsubsection*{(Co-)edge-regular and strongly regular graphs}

A regular graph is \emph{$a$-edge-regular} if every edge is contained in a constant number $a$ of triangles. Equivalently, a regular graph is edge-regular if every pair of adjacent vertices shares a constant number $a$ of common neighbors. A graph is \emph{$c$-co-edge-regular} if every pair of non-adjacent vertices shares a common number $c$ of common neighbors. The complement of an $a$-edge-regular graph is $(n-2k+a)$-co-edge-regular, and the complement of a $c$-co-edge-regular graph is $(n-2k+c-2)$-edge-regular.

A $k$-regular graph $G$ on $n$ vertices is \emph{$(n,k,a,c)$-strongly regular} if it is $a$-edge-regular and $c$-co-edge-regular. The complement $\ol G$ of such a strongly regular graph $G$ is $(n,n-k-1,n-2k+c-2,n-2k+a)$-strongly regular. A strongly regular graph is \emph{primitive} if both $G$ and $\ol G$ are connected, and \emph{imprimitive} otherwise. The only imprimitive strongly regular graphs are disjoint unions of equal sized complete graphs, and regular complete multipartite graphs. By a double counting argument, it holds that
\begin{equation}\label{eq:srg}
    c(n-k-1)=k(k-a-1).
\end{equation}

Connected regular graphs are strongly regular if and only if they have at most three distinct eigenvalues. The largest eigenvalue is simple and equal to the valency $k$, and, for non-complete strongly regular graphs, we write $\tau < \theta$ for the other two eigenvalues. In this case, we have
\begin{equation}\label{eq:srgeigenvalues}
    (x-\theta)(x-\tau)=x^2+(c-a)x+(c-k).
\end{equation}
For primitive strongly regular graphs we have $\tau<-1$ and $\theta>0$. If $G$ has eigenvalues $\tau<\theta<k$, then $\ol G$ has eigenvalues $-1-\theta<-1-\tau<n-k-1$.

A \emph{conference graph} is a strongly regular graph that has the same parameters (and hence same spectrum) as its complement. If a strongly graph is not integral (in other words, if $\tau$ and $\theta$ are not integral), then it necessarily is a conference graph.

For more details about strongly regular graphs, see \cite{spectra} or \cite{topics8}.

\subsubsection*{(Co)clique and chromatic number bounds: Delsarte/ratio bound and Hoffman bound}

A \emph{clique} is a subset $C$ of the vertices of a graph such that every two distinct vertices in $C$ are adjacent. A \emph{coclique} (also called \emph{stable set} or \emph{independent set}) is a subset $C$ of the vertices such that no two vertices in $C$ are adjacent. The \emph{clique number} $\omega(G)$ and \emph{independence number} $\alpha(G)$ of a graph $G$ are the sizes of the largest clique and coclique, respectively. We call a (co)clique $C$ \emph{$e$-regular} if every vertex that is not in $C$ has a constant number $e$ of neighbors in $C$.

If $C$ is a coclique in a $k$-regular graph $G$ on $n$ vertices, then the \emph{ratio bound} (by Hoffman, unpublished, see \cite[Theorem 3.5.2]{spectra} and \cite{ratiobound}) states
\begin{equation}\label{eq:ratiobound}
    |C| \le \frac{n\y_{\min}(G)}{\y_{\min}(G)-k},
\end{equation}
with equality if and only if $C$ is $(-\y_{\min}(G))$-regular. In the case of strongly regular graphs, this was already known \cite{Delsarte} so in the context of strongly regular graphs the bound is called the \emph{Delsarte bound}. A coclique meeting the ratio (Delsarte) bound is called a \emph{Hoffman (Delsarte) coclique}. A clique that is a Hoffman (Delsarte) coclique in the complement graph is called a \emph{Hoffman (Delsarte) clique}.

Since a coloring of a graph is a partition of the vertices into $\chi$ cocliques, we have
\begin{equation}\label{eq:classic}
    \chi(G) \ge \frac{n}{\a(G)}.
\end{equation}
For regular graphs, the Hoffman bound (\ref{eq:Hoffmanbound}) can be deduced using (\ref{eq:classic}) and the ratio bound (\ref{eq:ratiobound}). This implies the following result.

\begin{prop}[{\cite[Proposition 2.3]{3chromDRG}}]\thlabel{prop:constantequitable}

If $G$ is regular, then every color class of a Hoffman coloring is a Hoffman coclique. In other words, the color classes have equal size and every vertex is adjacent to exactly $-\y_{\min}(G)$ vertices of every color other than its own.
\end{prop}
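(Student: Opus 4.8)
The plan is to turn the defining equality $\chi(G)=h(G)$ into a counting identity via the ratio bound (\ref{eq:ratiobound}). Write $k$ for the valency and set $m\coloneqq -\y_{\min}(G)$; we may assume $G$ has an edge, so $m\ge 1>0$ and $h(G)=1-\frac{k}{\y_{\min}(G)}=1+\frac{k}{m}$. Let $C_1,\dots,C_\chi$ be the color classes of a Hoffman coloring, where $\chi=\chi(G)=h(G)$. Each $C_i$ is a coclique, so the ratio bound gives
\[
|C_i|\ \le\ \frac{n\,\y_{\min}(G)}{\y_{\min}(G)-k}\ =\ \frac{nm}{m+k}\qquad(1\le i\le \chi),
\]
with equality for a given $i$ if and only if $C_i$ is $m$-regular.

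Next I would sum these $\chi$ inequalities. Since the color classes partition $V(G)$,
\[
n=\sum_{i=1}^{\chi}|C_i|\ \le\ \chi\cdot\frac{nm}{m+k}=\left(1+\frac{k}{m}\right)\frac{nm}{m+k}=\frac{m+k}{m}\cdot\frac{nm}{m+k}=n .
\]
Hence equality must hold in every one of the $\chi$ instances of the ratio bound. In particular $|C_i|=\frac{nm}{m+k}$ for all $i$, so the color classes have equal size, and by the equality case of (\ref{eq:ratiobound}) each $C_i$ is an $m$-regular coclique, i.e.\ a Hoffman coclique.

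Finally I would restate "$m$-regular coclique" vertex-wise: if $v$ lies in color class $C_j$ and $i\neq j$, then $v\notin C_i$, and $m$-regularity of $C_i$ means $v$ has exactly $m=-\y_{\min}(G)$ neighbors in $C_i$. As a consistency check, summing over the $\chi-1$ classes $C_i$ with $i\neq j$ accounts for $(\chi-1)m=\frac{k}{m}\cdot m=k$ neighbors of $v$, which is all of them, as it must be since $v$ has no neighbor in its own class. This is exactly the asserted conclusion.

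There is no genuine obstacle here: the proof is a single telescoping computation once the ratio bound is in hand. The only points that need care are the sign bookkeeping (recall $\y_{\min}(G)<0$, so the numerator and denominator of the ratio bound are both negative and the bound is a positive quantity) and the observation that Hoffman colorability forces $h(G)$ to be the positive integer $\chi(G)$, which is what makes the chain of inequalities collapse to an equality.
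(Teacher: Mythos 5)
Your proof is correct and follows exactly the route the paper indicates: it only cites \cite[Proposition 2.3]{3chromDRG} and remarks that the statement follows from combining $\chi(G)\ge n/\alpha(G)$ with the ratio bound (\ref{eq:ratiobound}), which is precisely your telescoping argument together with the equality case of the ratio bound. Nothing is missing.
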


A \emph{spread} in a strongly regular graph is a partition of the vertices into Delsarte cliques. So for strongly regular graphs, \thref{prop:constantequitable} implies that a Hoffman coloring is equivalent to a spread in the complement, and this was the starting point of \cite{spreads}. Moreover, for strongly regular graphs a straightforward computation gives
\begin{equation}\label{eq:hhc=n}
    h(G)h(\ol G)=n.
\end{equation}
In \thref{lem:(s+1)(ols+1)=n} we will show a derivation of (\ref{eq:hhc=n}) in a more general setting. Now the Delsarte bound says that cliques have size at most the Hoffman number. In other words, for strongly regular graphs we have
\begin{equation}\label{eq:w<h<chi}
\omega(G)\le h(G)\le \chi(G).
\end{equation}

\subsubsection*{(Strict) vector colorings and the Lov\'{a}sz $\vartheta$-number}

Vector colorings were first introduced in \cite{KargerEtAl}. Given a graph $G$ and a real number $t\ge 2$, a \emph{vector $t$-coloring} is an assignment of unit vectors from a finite-dimensional real vector space to the vertices of $G$, such that for any pair of adjacent vertices the assigned vectors have inner product bounded above by $-1/(t-1)$. A vector coloring is \emph{strict} if for any pair of adjacent vertices, the inner product of the assigned vectors is equal to $-1/(t-1)$. Any graph coloring in the standard sense with $c$ colors induces a strict vector $c$-coloring through a regular simplex on $c$ vertices (see \cite[Lemma 4.1]{KargerEtAl}). The \emph{vector chromatic number} $\chi_v$ is defined as the least $t$ for which the graph admits a vector $t$-coloring. Similarly, the \emph{strict vector chromatic number} $\chi_{sv}$ is defined as the least $t$ for which the graph admits a strict vector $t$-coloring. We now have

\begin{equation}\label{eq:chiv<chisv<chi}
\chi_v(G)\le \chi_{sv}(G)\le \chi(G).
\end{equation}

In \cite{Lovasz}, a graph invariant $\vartheta(G)$ was introduced, later named the \emph{Lovász $\vartheta$-number}, as an upper bound for the Shannon capacity. In \cite{MERR,Schrijver} a variation $\vartheta'(G)$ was introduced, which has later been called the \emph{Schrijver variant of the Lov\'{a}sz $\vartheta$-number}. Vector colorings are known to be related to the Lovász $\vartheta$-number and its Schrijver variant. In particular, \cite[Theorem 8.2]{KargerEtAl} says that $\vartheta(\ol G)=\chi_{sv}(G)$ for all graphs, and furthermore in \cite{ChiVector,SabidussivsHedetniemi} is stated that $\vartheta'(\ol G)=\chi_v(G)$. In the following, we will switch between the vector chromatic notation $\chi_v,\chi_{sv}$ and Lovász/Schrijver notation $\vartheta,\vartheta'$, according to the context.

From \cite[Theorem 6]{Lovasz} it follows that $h(G)\le \chi_{sv}(G)$. In fact, the same argument works for the Schrijver variant of the Lovász $\vartheta$-number, so we get $h(G) \le \chi_v(G)$. Together with (\ref{eq:chiv<chisv<chi}), we obtain that the (strict) vector chromatic number is sandwiched between the Hoffman number and the standard chromatic number:
\begin{equation}\label{eq:sandwich}
    h(G) \le \chi_v(G) \le \chi_{sv}(G) \le \chi(G).
\end{equation}

\section{Strongly regular graphs and geometric parameters}\label{sec:geompara}

In this section we show two main results: that Hoffman colorable strongly regular graphs and their complements are pseudo-geometric (\thref{thm:HCpseudoG}), and that there are only finitely many primitive strongly regular graphs with a bounded Hoffman number (\thref{thm:boundedHoffman}). Compared to the known result \cite{ProefschriftHaemers} that there are only finitely many primitive strongly regular graphs with a given chromatic number, \thref{thm:boundedHoffman} shows that this conclusion holds using a more relaxed condition. We also determine all primitive strongly regular graphs with Hoffman number at most 3 (\thref{cor:h<3}). To do this we study strongly regular graphs using parameters that are based on $(s,t,\a)$-partial geometries. In \cite{Neumaier, NeumaierBound}, steps were already made in this direction, see \thref{rmk:Neumaier}. These new ``geometric parameters'' $(s,t,\a)$ are completely interchangeable with the standard ``combinatorial parameters'' $(n,k,a,c)$ and the ``spectral parameters'' $(k,\theta,\tau)$. The relevance of the geometric parameters, and the connection to Hoffman colorability, lies in the fact that the Hoffman number is very easily expressible in the geometric parameters; it is $s+1$ (see (\ref{eq:h=s+1})).

We first briefly recall some definitions regarding partial geometries and (pseudo-)geometricity of strongly regular graphs. An \emph{incidence structure} is a triple $(P,L,I)$ with a set of \emph{points} $P$, a set of \emph{lines} $L$, and $I\subseteq P \times L$ a relation. If $(p,\ell)\in I$, then we write $pI\ell$ and say that $p$ and $\ell$ are \emph{incident}. Two points are \emph{collinear} if they are incident to a common line. The \emph{collinearity graph} is the graph with the points as vertices, such that adjacency is given by collinearity.

Let $s, t,  \a$ be positive integers. A \emph{partial geometry} with parameters $(s,t,\a)$ is an incidence structure $(P,L,I)$, with the following requirements:
\begin{itemize}
    \item Every pair of points is incident to at most one common line.
    \item Every line is incident to exactly $s+1$ points, and every point is incident to exactly $t+1$ lines.
    \item If $p$ is not incident to $\ell$, then $p$ is collinear to exactly $\a$ points incident to $\ell$.
\end{itemize}

It is known  \cite[Theorem 4.1]{topics8} that the collinearity graph of a partial geometry is strongly regular with parameters equal to
\begin{equation}\label{eq:CPPG}
    (n,k,a,c) = \Big ((s+1)\frac{st+\a}\a,s(t+1),s-1+t(\a-1),\a(t+1)\Big ).
\end{equation}
 A graph that is the collinearity graph of a partial geometry is called \emph{geometric}. A strongly regular graph for which there exist positive integers $s,t,\a$ such that the parameters are given by (\ref{eq:CPPG}) is called \emph{pseudo-geometric}, see \cite{spectra} or \cite{topics8}. The eigenvalues of a pseudo-geometric strongly regular graph are given by $k=s(t+1)$, $\theta=s-\a$, $\tau=-t-1$. Equivalently, we have
\begin{equation}\label{eq:geometric parameters}
    (s,t,\a) = \Big (-\frac k \tau, -\tau-1, -\frac k\tau - \theta \Big ).
\end{equation}

In order to show our results, we extend (\ref{eq:geometric parameters}) to all primitive strongly regular graphs, including non-pseudo-geometric graphs: we define the \emph{geometric parameters} $(s,t,\a)$ of a general primitive strongly regular with eigenvalues $k>\theta>\tau$ as in (\ref{eq:geometric parameters}). From (\ref{eq:srg}) and (\ref{eq:srgeigenvalues}), it follows that the combinatorial parameters of a primitive strongly regular graph with geometric parameters $(s,t,\a)$ are given by (\ref{eq:CPPG}). This implies that the geometric parametrization is as complete as the combinatorial and spectral parametrizations. The advantage of the geometric parametrization over the other two parametrizations is the ease of expressing the Hoffman number:
\begin{equation}\label{eq:h=s+1}
    h=s+1.
\end{equation}

Furthermore, one could say that the fact that (\ref{eq:CPPG}) holds for every primitive strongly regular graph implies that every strongly regular graph is pseudo-geometric in a way, if we relax the requirement that $s$, $t$, and $\a$ in (\ref{eq:CPPG}) are positive integers. However, we reserve the term ``pseudo-geometric'' for the case where the geometric parameters are positive integers. Note that \cite{spectra} and \cite{topics8} define geometric and pseudo-geometric also for the imprimitive strongly regular graphs, but we do not consider this case.

\begin{rmk}\thlabel{rmk:Neumaier}
The geometric parameters correspond to $K-1$, $m-1$, $e$ respectively, which were introduced in \cite{NeumaierBound}. However, this was only used in the context of edge-regular graphs with a regular clique (\emph{Neumaier graphs}, also see Section \ref{sec:characterization}). Later, in \cite{Neumaier}, $K$ reappears as $s+1$, now in the context of general edge-regular graphs (with or without a regular clique). However, the connection to partial geometries and the Hoffman number of strongly regular graphs was not noted. In Section \ref{sec:characterization}, we will use $s$ again, in the context of regular graphs.    
\end{rmk}

If a strongly regular graph $G$ is geometric, then the geometric parameters of $G$ are equal to the parameters of the partial geometry of which the collinearity graph is $G$. Consequently, the parameters of the square lattice graph $L(K_{m,m})$ are $(m-1,1,1)$, and the parameters of the triangular graph $L(K_m)$ are $(m-2,1,2)$. For general primitive strongly regular graphs, the geometric parameters $s$, $t$, and $\a$ are all positive, since $\tau<-1$ and $c>0$ for primitive strongly regular graphs. However, they might not be integral, or even rational. We can explain this trichotomy in terms of the rationality/integrality of the Hoffman number $h$ of the primitive strongly regular graph.
\begin{prop}\thlabel{prop:trichotomy}
    Let $G$ be a primitive strongly regular graph with eigenvalues $k>\theta>\tau$ and geometric parameters $(s,t,\a)$. Then the following are equivalent:
    \begin{enumerate}[label=(\roman*)]
        \item the Hoffman number of $G$ is rational,
        \item $s$ and $\a$ are rational and $t$ is integral,
        \item $G$ is integral (i.e. $\theta$ and $\tau$ are integers).
    \end{enumerate}
    Moreover, the Hoffman number of $G$ is an integer if and only if $G$ is pseudo-geometric.
\end{prop}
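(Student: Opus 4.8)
The plan is to prove the cyclic chain of implications $(iii)\Rightarrow(ii)\Rightarrow(i)\Rightarrow(iii)$ and then deduce the ``moreover'' statement from the resulting equivalence together with the fact, recalled just before the proposition, that the geometric parameters of a primitive strongly regular graph are always positive. Throughout I will use the identities $h=s+1=1-\frac{k}{\tau}$, together with $\tau<-1$ (so that $\tau\neq 0$ and all the expressions involved are well-defined). The implication $(iii)\Rightarrow(ii)$ is immediate: if $\theta,\tau$ are integers then $t=-\tau-1$ is an integer, $s=-\frac{k}{\tau}$ is a quotient of integers hence rational, and $\a=s-\theta$ is rational. The implication $(ii)\Rightarrow(i)$ is also immediate, since $h=s+1$.

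The only implication requiring work is $(i)\Rightarrow(iii)$, which I would prove by contraposition using the dichotomy recalled in Section~\ref{sec:preliminaries}: a non-integral primitive strongly regular graph is a conference graph, and for a conference graph one has $n=4\mu+1$, $k=\frac{n-1}{2}$, and $\tau=\frac{-1-\sqrt{n}}{2}$. Then a one-line rationalization using $n-1=(\sqrt{n}-1)(\sqrt{n}+1)$ gives $s=-\frac{k}{\tau}=\frac{n-1}{1+\sqrt{n}}=\sqrt{n}-1$, so that $h=s+1=\sqrt{n}$. Since $G$ is assumed non-integral, $n$ is not a perfect square (otherwise the eigenvalues $\frac{-1\pm\sqrt n}{2}$ would be integers, $\sqrt n$ being odd as $n\equiv 1\pmod 4$), hence $h=\sqrt{n}$ is irrational. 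This closes the cycle, establishing the equivalence of (i)--(iii). I expect this computation to be the only real obstacle, and it is a routine one; the point worth stating carefully is the justification that $\sqrt{n}$ is irrational in the conference case.

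For the ``moreover'' part, observe that $h$ is an integer if and only if $s$ is an integer. If $s$ is an integer then in particular $h$ is rational, so by the equivalence just proved $G$ is integral; then $t=-\tau-1$ and $\a=s-\theta$ are integers as well, and since $s,t,\a$ are moreover positive (as recalled above, because $\tau<-1$ and $c>0$), $G$ is pseudo-geometric. Conversely, if $G$ is pseudo-geometric then by definition $s$ is a positive integer, whence $h=s+1$ is an integer. This completes the argument.
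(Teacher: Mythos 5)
Your proposal is correct, but for the one nontrivial implication it takes a genuinely different route from the paper. For (i) $\Rightarrow$ (iii) the paper argues directly: if $h(G)$ is rational then $\tau=-k/(h(G)-1)$ is a rational number, and since $\tau$ is an algebraic integer (an eigenvalue of an integer matrix), it must be an integer, whence $\theta$ is an integer too. You instead argue by contraposition via the classification of non-integral primitive strongly regular graphs as conference graphs, computing explicitly that $s=\sqrt n-1$ and $h=\sqrt n$ for a conference graph and checking that $\sqrt n$ is irrational when the graph is non-integral (your care with the parity of $\sqrt n$ modulo the congruence $n\equiv 1\pmod 4$ is the right thing to pin down there). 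Both arguments are sound; the paper's is shorter and more self-contained, needing only the standard fact that a rational algebraic integer is an integer, while yours leans on the conference-graph dichotomy but yields as a by-product the identity $h(G)=\sqrt n$ for conference graphs, which the paper in fact re-derives later (from $h(G)h(\ol G)=n$) in the proof of \thref{thm:boundedHoffman}. Your treatment of (iii) $\Rightarrow$ (ii) $\Rightarrow$ (i) and of the ``moreover'' part coincides with the paper's, except that you make explicit the positivity of $s,t,\a$ needed to conclude pseudo-geometricity, a point the paper leaves to the discussion preceding the proposition.
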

\begin{proof}
For the first set of equivalences, note that (iii) $\Rightarrow$ (ii) $\Rightarrow$ (i) follows from (\ref{eq:geometric parameters}) and (\ref{eq:h=s+1}). For (i) $\Rightarrow$ (iii), note that $\tau=-k/(h(G)-1)$ is rational, and since $\tau$ is an algebraic integer, this implies that $\tau$ is an integer, and hence also $\theta$.

For the final equivalence, note that by (\ref{eq:h=s+1}) it follows that a pseudo-geometric graph has an integral Hoffman bound. For the other direction, suppose that $h(G)$ is integral. Then by (\ref{eq:h=s+1}) $s$ is integral. By the first set of equivalences also $t$ and $\a=s-\theta$ are integral, so that $G$ is pseudo-geometric.
\end{proof}

By \thref{prop:trichotomy}, the strongly regular graphs with an irrational Hoffman number are precisely the non-integral strongly regular graphs (which are necessarily conference graphs), for instance the pentagon. An example of a strongly regular graph with a rational but not integral Hoffman number is the Petersen graph.

Using \thref{prop:trichotomy} we can show that Hoffman colorable strongly regular graphs and their complements are pseudo-geometric. We also have a similar but weaker statement for strongly regular graphs with a Delsarte clique.
\begin{thm}\thlabel{thm:HCpseudoG}
    Let $G$ be a primitive strongly regular graph. Then $G$ is pseudo-geometric if $G$ is Hoffman colorable or has a Delsarte clique. In the first case, also $\ol G$ is pseudo-geometric.
\end{thm}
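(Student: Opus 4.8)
The whole statement reduces to the final sentence of \thref{prop:trichotomy}: a primitive strongly regular graph is pseudo-geometric precisely when its Hoffman number is an integer. So in each case the plan is simply to exhibit the Hoffman number of the relevant graph as a quantity that is visibly a positive integer, and then quote \thref{prop:trichotomy}.

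For a Hoffman colorable $G$ the claim about $G$ itself is immediate: by definition $h(G)=\chi(G)$, and a chromatic number is an integer, so $h(G)$ is an integer and $G$ is pseudo-geometric. For the complement, I would invoke \thref{prop:constantequitable}: a Hoffman coloring of the regular graph $G$ has $\chi(G)=h(G)$ color classes, all of equal size, hence each of size $n/h(G)$; being a number of vertices, $n/h(G)$ is a positive integer. By (\ref{eq:hhc=n}) this common class size equals $h(\ol G)$, so $h(\ol G)$ is an integer too. Since $\ol G$ is again a primitive strongly regular graph, \thref{prop:trichotomy} applied to $\ol G$ gives that $\ol G$ is pseudo-geometric.

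For the Delsarte clique case, recall (see around (\ref{eq:w<h<chi})) that a Delsarte clique of $G$ is exactly a clique whose size attains the clique bound, i.e.\ a clique $C$ with $|C|=h(G)$. As $|C|$ counts vertices, $h(G)$ is a positive integer and \thref{prop:trichotomy} again yields that $G$ is pseudo-geometric. In this situation nothing forces $h(\ol G)=n/h(G)$ to be integral (equivalently, $\ol G$ need not carry a Delsarte clique), which is why pseudo-geometricity of $\ol G$ is only asserted under the stronger, Hoffman-colorability hypothesis.

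I do not expect a genuine obstacle; the mathematical content is carried entirely by \thref{prop:trichotomy}. The only points needing a little care are: (i) invoking \thref{prop:constantequitable} so that it is $n/h(G)$, and not merely $h(G)$, that is forced to be an integer; (ii) identifying $n/h(G)$ with $h(\ol G)$ through (\ref{eq:hhc=n}); and (iii) using the exact characterization of a Delsarte clique as one of size $h(G)$ (rather than just of size $\le h(G)$), so that integrality of $h(G)$ drops out.
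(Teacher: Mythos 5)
Your proposal is correct, and its overall strategy is exactly the paper's: reduce everything to the final assertion of \thref{prop:trichotomy} by exhibiting the Hoffman number as an integer. For $G$ itself the two arguments coincide — one of the two inequalities in (\ref{eq:w<h<chi}) becomes an equality, forcing $h(G)\in\Z$. The only divergence is in how integrality of $h(\ol G)$ is obtained in the Hoffman colorable case. The paper invokes the duality between Hoffman colorings of $G$ and spreads of $\ol G$: a spread contains a Delsarte clique, so the Delsarte-clique case of the theorem applies to $\ol G$. You instead note via \thref{prop:constantequitable} that the common color class size $n/h(G)$ is an integer and identify it with $h(\ol G)$ through (\ref{eq:hhc=n}), then apply \thref{prop:trichotomy} to $\ol G$ directly. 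The two routes are morally the same fact viewed from opposite sides (a Hoffman coclique of $G$ of size $n/h(G)=h(\ol G)$ \emph{is} a Delsarte clique of $\ol G$), but yours is slightly more self-contained, using only the preliminaries rather than the spread machinery of Haemers--Tonchev; the paper's version has the small advantage of making the first assertion of the theorem do double duty. Your closing remark explaining why pseudo-geometricity of $\ol G$ is not claimed in the Delsarte-clique case is a sensible sanity check and consistent with (\ref{eq:GPcomp}).
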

\begin{proof}
    Let $G$ be a primitive strongly regular graph. If $G$ is Hoffman colorable or has a Delsarte clique, then one of the two inequalities of (\ref{eq:w<h<chi}) is an equality. Hence the Hoffman number is integral, and by \thref{prop:trichotomy} $G$ is pseudo-geometric. In particular, if $G$ has a spread, then it has a Delsarte clique, so it is pseudo-geometric. The last part of the result now follows from the fact that a Hoffman coloring in a strongly regular graph is equivalent to a spread in the complement \cite{spreads}.
\end{proof}

If $G$ is a primitive strongly regular graph with geometric parameters $(s,t,\a)$, then the geometric parameters $(\ol s, \ol t, \ol \a)$ are given by
\begin{equation}\label{eq:GPcomp}
    (\ol s, \ol t, \ol \a) = \Big ( \frac {st}\a , s-\a, \frac{t(s-\a)}{\a} \Big ),
\end{equation}
and hence the complement of a pseudo-geometric graph is pseudo-geometric again (as is the case in \thref{thm:HCpseudoG}) if and only if $\a$ divides $st$.

\begin{rmk}\thlabel{rmk:alpha}
Let $G$ be a primitive strongly regular graph with a Delsarte clique $C$ (so that $s$, $t$, and $\a$ are integers by \thref{thm:HCpseudoG}). We know that $C$ is of size $s+1$. In the complement $\ol G$, $C$ is a Delsarte coclique, so every vertex outside $C$ is adjacent to $-\y_{\min}(\ol G)=\theta+1$ vertices of $C$ (see (\ref{eq:ratiobound})). This implies that in $G$ every vertex outside of $C$ is adjacent to precisely $(s+1)-(\theta+1)=\a$ vertices of $C$. Note that this combinatorial interpretation of $\a$ agrees with its definition in the context of partial geometries.
\end{rmk}

\begin{exa}
An example of \thref{thm:HCpseudoG} where both $G$ and $\ol G$ are geometric is $L(K_6)$, of which the chromatic number and Hoffman number both equal 5. $\ol {L(K_6)}$ is the collinearity graph of a $(2,2,1)$-partial geometry. Note however that only $L(K_6)$ is Hoffman colorable.
\end{exa}

It is known that there are only finitely many primitive strongly regular graphs with a given chromatic number \cite[Theorem 4.1.2]{ProefschriftHaemers}. We strengthen this result by showing that there are finitely many primitive strongly regular graphs with a bounded Hoffman number. In order to do this, we need the geometric parameters and the following well-known result, which, for convenience, we rephrase in the context of this section.

\begin{thm}[{\cite[Theorem 9.1.9]{spectra}}]\thlabel{thm:boundedtau}
    Let $m\in \N$. Then, apart from geometric graphs with $\a\in \{t,t+1\}$, there are only finitely many primitive strongly regular graphs with smallest eigenvalue $\tau=-m$.
\end{thm}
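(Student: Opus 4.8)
\thref{thm:boundedtau} is not something I would prove from scratch: it is a restatement, in terms of the geometric parameters $(s,t,\a)$, of Neumaier's classical finiteness theorem for strongly regular graphs with a prescribed smallest eigenvalue. The plan is to recall that theorem in its usual form, identify its two infinite families with the geometric graphs satisfying $\a\in\{t,t+1\}$, and then read off the statement. (For $m\le 1$ there are no primitive strongly regular graphs with $\tau=-m$, so that case is vacuous; assume $m\ge 2$.)

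First I would state the classical version: for a fixed integer $m\ge 2$, every primitive strongly regular graph with smallest eigenvalue $-m$ is, with only finitely many exceptions (whose number depends on $m$), a pseudo-Latin-square graph or a pseudo-Steiner graph. I would then combine this with the relevant completion theorems --- Bruck's theorem for nets and its analogue for Steiner systems, both available in \cite{spectra} --- to absorb the finitely many non-completable pseudo-graphs into the list of exceptions, so that, with finitely many exceptions, such a graph is in fact \emph{geometric}: the collinearity graph of a net, or of the dual of a Steiner $2$-design (equivalently, the block graph of a Steiner system).

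It then remains to translate the two families into the geometric parameters via the dictionary (\ref{eq:CPPG})--(\ref{eq:geometric parameters}). A net of order $q$ and degree $r$ is precisely a partial geometry $\mathrm{pg}(q-1,r-1,r-1)$, so its collinearity graph is geometric with $\a=t$; dually, the block graph of a Steiner system $S(2,K,v)$ is the collinearity graph of the partial geometry $\mathrm{pg}\left(\frac{v-1}{K-1}-1,\,K-1,\,K\right)$, hence geometric with $\a=t+1$. The converse inclusions hold as well, since a $\mathrm{pg}(s,t,t)$ is a net and a $\mathrm{pg}(s,t,t+1)$ is the dual of a Steiner $2$-design. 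Thus the two infinite families are exactly the geometric graphs with $\a\in\{t,t+1\}$, and any other primitive strongly regular graph with $\tau=-m$ must be one of the finitely many exceptions, which is the claim. The hard part is not mathematical depth but bookkeeping: I would need to be careful to pair the Latin-square family with $\a=t$ and the Steiner family with $\a=t+1$ (and not the other way round), and to check that the completion theorems are strong enough that ``geometric'', rather than merely ``pseudo-geometric'', is the correct word in the reformulated statement.
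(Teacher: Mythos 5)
The paper gives no proof of this statement at all: it is quoted verbatim (up to notation) from Brouwer--Haemers as Theorem 9.1.9, ``rephrased in the context of this section,'' so your plan of invoking Neumaier's classification of strongly regular graphs with smallest eigenvalue $-m$, using the completion theorems to upgrade ``pseudo-Latin-square/pseudo-Steiner'' to genuinely geometric, and then translating into the parameters $(s,t,\a)$ is exactly what the citation encapsulates. Your dictionary is also the right way around: nets correspond to $\a=t$ and block graphs of Steiner $2$-designs to $\a=t+1$.
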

\begin{thm}\thlabel{thm:boundedHoffman}
    Let $m$ be a positive number. Then there are only finitely many primitive strongly regular graphs with Hoffman number at most $m$.
\end{thm}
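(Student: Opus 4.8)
The plan is to translate the hypothesis into the geometric parameters and then bound every parameter of $G$ in terms of $m$. By (\ref{eq:h=s+1}), a primitive strongly regular graph with geometric parameters $(s,t,\a)$ has $h=s+1$, so the assumption $h(G)\le m$ is exactly $s\le m-1$. I would first deal with the non-integral graphs: if $G$ is not integral then $G$ is a conference graph (as recalled in Section~\ref{sec:preliminaries}), and a direct computation gives $h(G)=\sqrt{n}$, so $n\le m^2$ and there are only finitely many such $G$. From now on assume $G$ is integral, so that $\theta$ and $\tau=-(t+1)$ are integers with $t\ge1$, and (since $G$ is primitive) $s>1$.

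For integral $G$ I would bound $\theta$ and $\a$ in terms of $m$. Since $\theta\tau=c-k$ with $c\ge1$, we get $\theta=\frac{k-c}{-\tau}<\frac{k}{-\tau}=s\le m-1$, hence $1+\theta\le m$. For $\a=s-\theta=\frac{c}{-\tau}$: from $c\ge1$ we have $\a\ge\frac{1}{t+1}$, and if in addition $\a<1$ then $a=s-1+t(\a-1)\ge0$ gives $\a\ge1-\frac{s-1}{t}$; the pointwise maximum of these two lower bounds over $t\ge1$ is minimized when the two coincide, and using $s\le m-1$ this minimum exceeds $\frac1m$. (If $\a\ge1$ then trivially $\a>\frac1m$.) So $\a>\frac1m$.

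The next step is to bound the multiplicity $g$ of the eigenvalue $\tau$. From $1+f+g=n$, $k+f\theta+g\tau=0$ and the parameter formulas (\ref{eq:CPPG}) one computes
\[
    g=\frac{s(1+s-\a)(st+\a)}{\a\,(t+1+s-\a)}.
\]
For fixed $s,\a$ this is strictly increasing in $t$ (the derivative has the sign of $(s+1)(s-\a)=(s+1)\theta>0$) and tends to $\frac{s^{2}(1+s-\a)}{\a}=\frac{s^{2}(1+\theta)}{\a}$ as $t\to\infty$; hence $g<\frac{s^{2}(1+\theta)}{\a}<m^{2}(m-1)^{2}$ by the bounds of the previous paragraph. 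Thus $g$ is bounded by a function of $m$, so by the absolute bound for strongly regular graphs (see, e.g., \cite{spectra}) $n\le\frac12 g(g+3)$ is bounded as well; since there are only finitely many graphs on a bounded number of vertices, the result follows.

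The main obstacle is the boundedness of $g$: it relies on the closed form of $g$ in the geometric parameters together with its monotonicity in $t$, and on the a priori lower bound $\a>\frac1m$, which itself needs the interplay of $c\ge1$ and $a\ge0$. As a remark, once $g$ is bounded one also gets $-\tau<k<n\le\frac12 g(g+3)$, so $\tau$ is bounded in terms of $m$; one could then finish instead via \thref{thm:boundedtau}, after noting that the geometric graphs with $\a\in\{t,t+1\}$ and $s\le m-1$ form a finite list because their parameters are determined by finitely many admissible triples $(s,t,\a)$.
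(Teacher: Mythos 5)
Your proof is correct, but it takes a genuinely different route from the paper. The paper also splits off the conference-graph case via $h(G)=\sqrt n$, but for integral $G$ it passes to the complement: from $\theta<s\le m-1$ it gets $\ol\tau=-1-\theta>-m$ and then invokes \thref{thm:boundedtau} (finitely many primitive strongly regular graphs with a given integral smallest eigenvalue, apart from geometric graphs with $\a\in\{t,t+1\}$), handling the exceptional geometric family by showing $\ol s,\ol t,\ol\a$ are all bounded positive integers. You instead stay with $G$ itself, derive the closed form $g=\frac{s(1+s-\a)(st+\a)}{\a(t+1+s-\a)}$ for the multiplicity of $\tau$, bound it by its limit $s^2(1+\theta)/\a$ as $t\to\infty$ using $s\le m-1$, $1+\theta\le m$ and $\a\gtrsim 1/m$, and finish with the absolute bound. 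Your argument is more self-contained (it avoids the nontrivial classification result behind \thref{thm:boundedtau}) and yields an explicit bound $n=O(m^8)$, at the cost of the multiplicity computation and the lower bound on $\a$; the paper's proof is shorter given the cited theorem. I verified your formula for $g$, its monotonicity in $t$, and the limit. One small step to tighten: the claim that the pointwise maximum of $\frac1{t+1}$ and $1-\frac{s-1}{t}$ is "minimized where they coincide" is loose (for $s\ge2$ the two bounds need not cross in the range $t\ge1$); the clean argument is a two-case split — if $t+1\le m$ then $\a=c/(t+1)\ge 1/m$ directly, and if $t+1>m$ then $t>s$ so $a\ge0$ gives $\a\ge 1-\frac{s-1}{t}>\frac1s\ge\frac1{m-1}$. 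This yields $\a\ge 1/m$ (not necessarily strict), which still gives $g<m^2(m-1)^2$ and suffices.
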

\begin{proof}
Let $G$ be a primitive strongly regular graph with $h(G)\le m$. We consider two cases.

If $G$ is a conference graph, then $h(G)=h(\ol G)$ and so by (\ref{eq:hhc=n}) $h(G)=\sqrt {n}$, so that the number of vertices of such a graph is bounded.

If $G$ is not a conference graph, then it is integral by \cite[Theorem 9.1.3(iv)]{spectra}. Using (\ref{eq:geometric parameters}) and (\ref{eq:h=s+1}), we have $\theta= s-\a <s =h(G)-1\le m-1$, and so $\ol \tau = -1-\theta >-m$. Applying \thref{thm:boundedtau} to every integer at most $m$ yields a finite set $\mathcal G$ such that either $\ol G \in \mathcal G$ or $\ol G$ is a geometric graph with $\ol \a\in \{\ol t,\ol t+1\}$. We claim that there are finitely many options in the second case as well. To show this, we prove that all three of $\ol s$, $\ol t$, and $\ol \a$ are bounded positive integers, implying that there are only finitely many possible parameter sets for such $\ol G$. Since $\ol G$ is geometric, all three of $\ol s$, $\ol t$, and $\ol \a$ are positive integers. Note that $\ol t = s-\a$ is bounded as above, and hence also $\ol \a \le \ol t +1$ is bounded. For $\ol s$, we have by (\ref{eq:GPcomp}) that $\ol s = s \ol \a/ \ol t \le 2s $ and so $\ol s$ is also bounded, concluding the proof.
\end{proof}

We make this explicit for the case where the Hoffman number is bounded above by 3.

\begin{cor}\thlabel{cor:h<3}
The primitive strongly regular graphs with Hoffman number at most 3 are the following (together with their geometric parameters):
    \begin{itemize}
        \item the pentagon: $\Big (\sqrt 5 -1, \frac{\sqrt 5 -1}2,\frac{\sqrt 5 -1}2 \Big )$,
        \item $L(K_{3,3})$: $(2,1,1)$,
        \item the Petersen graph: $\big (\frac 32, 1,\frac12 \big )$,
        \item $\ol {L(K_6)}$: $(2,2,1)$,
        \item the complement of the Clebsch graph: $\big (\frac 53,2,\frac 23\big )$,
        \item the complement of the Schläfli graph: $(2,4,1)$.
    \end{itemize}
\end{cor}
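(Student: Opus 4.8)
The plan is to combine the trichotomy of Proposition~\ref{prop:trichotomy} with a case analysis on the integrality of the Hoffman number, and in the integral (pseudo-geometric) case to directly enumerate parameter sets via the geometric parametrization. By \eqref{eq:h=s+1} the condition $h(G)\le 3$ means $s\le 2$, i.e.\ $s\in(1,2]$ since $s>1$ for a primitive strongly regular graph (indeed $s=-k/\tau>1$ as $\tau<-1$ and $k>-\tau$). I would split into three subcases according to \thref{prop:trichotomy}: (a) $h$ irrational, (b) $h$ rational but not integral, (c) $h$ integral.

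In case (a), $G$ is non-integral, hence a conference graph, so $h(G)=h(\ol G)$ and by \eqref{eq:hhc=n} we get $h(G)=\sqrt n$; combined with $h\le 3$ this forces $n\le 9$, and the only primitive conference graph on at most $9$ vertices is the pentagon ($n=5$, $h=\sqrt5$). Its geometric parameters follow from \eqref{eq:geometric parameters} using $k=2$, $\tau=-\tfrac{1+\sqrt5}{2}$, $\theta=\tfrac{\sqrt5-1}{2}$: one computes $s=-k/\tau=\sqrt5-1$, $t=-\tau-1=\tfrac{\sqrt5-1}{2}$, $\alpha=s-\theta=\tfrac{\sqrt5-1}{2}$. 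In case (b), $\tau=-k/(h-1)$ is rational hence (being an algebraic integer) an integer, contradicting non-integrality unless $G$ is a conference graph—but conference graphs have irrational $h$ unless $n$ is a perfect square, and a rational non-integral value is impossible. So I would instead handle (b) as follows: $h$ rational means (by the trichotomy) $t$ is a positive integer and $s=\tfrac{k}{-\tau}$, $\alpha=s-\theta$ are rational; with $1<s\le2$ and $s=h-1$, and $t\ge1$. Here $t=-\tau-1$ is a positive integer; since $\tau<-1$ we have $t\ge1$. The bound $1<s\le2$ together with $\theta=s-\alpha>0$ constrains things tightly. The cleanest route: use that $s+1=h\le 3$ and $G$ integral is false, so $t\in\mathbb{Z}_{\ge1}$ but $s,\alpha\notin\mathbb{Z}$; I would enumerate small $t$ and use \eqref{eq:CPPG} (which holds for all primitive SRGs) to see which $(s,t,\alpha)$ with $s\in(1,2)$ yield feasible integer parameters $(n,k,a,c)$—feasibility (integrality of $n,k,a,c$, Krein and absolute bounds, known classification of small SRGs) pins down the Petersen graph, with parameters $(\tfrac32,1,\tfrac12)$ from $k=3$, $\tau=-2$, $\theta=1$.

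For case (c), $G$ is pseudo-geometric with $s\in\{2\}$ (since $s$ is now a positive integer in $(1,2]$, forcing $s=2$, $h=3$). Then $t$ and $\alpha=s-\theta=2-\theta$ are positive integers with $0<\theta<s=2$, so $\theta\in\{1\}$ giving $\alpha=1$, or $\theta\in\{0\}$ which is excluded since $\theta>0$ for primitive SRGs—wait, $\theta$ need not be $\le s$ automatically, but $\alpha=s-\theta>0$ forces $\theta<2$, so indeed $\theta=1$, $\alpha=1$. Thus the geometric parameters are $(2,t,1)$ and I would then bound $t$: by \eqref{eq:CPPG} the parameters are $\big(3(2t+1),2(t+1),t,t+1\big)$, and I would invoke \thref{thm:boundedtau} / \thref{thm:boundedHoffman} together with known feasibility conditions (Krein conditions, absolute bound, the classification of strongly regular graphs with $\tau=-t-1$ small) to show $t\in\{1,2,4\}$, yielding $L(K_{3,3})$ with $(2,1,1)$, $\ol{L(K_6)}$ with $(2,2,1)$, and the complement of the Schläfli graph with $(2,4,1)$. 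Finally, the complement of the Clebsch graph appears with parameters $\big(\tfrac53,2,\tfrac23\big)$: this is the remaining case-(b) graph, with $k=5$, $\tau=-3$, $\theta=1$, so $s=5/3$, $t=2$, $\alpha=s-\theta=2/3$; I would recover it in case (b) by the same finiteness/feasibility argument.

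The main obstacle is case (b) (and the tail of case (c)): turning ``$h\le3$'' into a genuinely finite list requires more than the geometric parametrization—one must exclude infinite families with $s$ just below $2$ (or $t$ large). I expect to lean on \thref{thm:boundedHoffman}'s proof to reduce to finitely many parameter sets, and then on the standard necessary conditions for strongly regular graphs (integrality of multiplicities, Krein conditions, absolute bound) plus uniqueness results for the Petersen, Clebsch, and Schläfli graphs to nail down both existence and uniqueness of each listed graph. The geometric parameters in each case are then a one-line substitution into \eqref{eq:geometric parameters}.
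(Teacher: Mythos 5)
Your three-way split via \thref{prop:trichotomy} points in the right direction and you do land on the correct list, but as written there is a genuine gap: in cases (b) and (c) you never actually produce a finite, checkable enumeration. You defer the decisive step to unspecified ``feasibility conditions'' and to leaning on \thref{thm:boundedHoffman}, but that theorem only gives finiteness in principle, not an explicit list, and your proposed direct attack on the family $(2,t,1)$ (and the even vaguer treatment of non-integral $s\in(1,2)$) is exactly the part that needs an argument. The observation that closes everything --- and which is the entire engine of the paper's three-line proof --- is that in the integral case $\theta=s-\alpha$ is a positive integer with $\theta<s=h(G)-1\le 2$, hence $\theta=1$ and $\ol\tau=-1-\theta=-2$. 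One then invokes the classification of strongly regular graphs with smallest eigenvalue $-2$ \cite[Theorem 9.2.1]{spectra}: $\ol G$ is a triangular graph, a lattice graph, or one of finitely many exceptional graphs, and filtering the complements by $h\le 3$ immediately yields the Petersen graph, $\ol{L(K_6)}$, and the complements of the Clebsch and Schl\"afli graphs. This single reduction subsumes both your case (b) and your case (c) and removes any need for Krein or absolute-bound computations on the pseudo-GQ$(2,t)$ parameters.

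Two further remarks. The opening of your case (b) is incoherent: from $h$ rational you correctly deduce $\tau\in\mathbb Z$, hence $G$ integral, and there is no contradiction to resolve and no reason to reintroduce conference graphs there --- rational non-integral $h$ simply means $G$ is integral with $s\notin\mathbb Z$ (the Petersen graph, $h=5/2$, being the prototype). Finally, a harmless difference of bookkeeping: the paper treats the conference-graph case as a whole ($h=\sqrt n\le 3$ forces $n\le 9$, giving the pentagon and $L(K_{3,3})$), whereas you restrict case (a) to non-integral conference graphs and recover $L(K_{3,3})$ in the integral case; both placements are fine once the integral case is actually closed as above.
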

\begin{proof}
    We follow the proof of \thref{thm:boundedHoffman}. If $G$ is a conference graph, then it has at most nine vertices, giving only the pentagon and $L(K_{3,3})$ (see \cite[Section 9.9]{spectra}). If $G$ is not a conference graph, then it is integral, and we get $\ol \tau >-3$, so that $\ol \tau =-2$. The strongly regular graphs with least eigenvalue $-2$ are classified, see \cite[Theorem 9.2.1]{spectra}, from which the rest of the list follows.
\end{proof}

Note that the primitive strongly regular graphs with chromatic number 3 are known \cite{ProefschriftHaemers}; they are the pentagon, $L(K_{3,3})$, and the Petersen graph.

\thref{thm:boundedHoffman} also implies the following.

\begin{cor}
    For every natural number $c$, there are finitely many primitive strongly regular graphs that have a Delsarte clique of size $c$.
\end{cor}

By \thref{cor:h<3}, the primitive strongly regular graphs with Hoffman number 3 are $L(K_{3,3})$, $\ol{L(K_6)}$, and the complement of the Schläfli graph. Note that these three graphs are geometric, and hence all have a Delsarte clique. Thus, the primitive strongly graphs with a Delsarte clique of size 3 are precisely the three graphs with Hoffman number 3.

\section{Vector colorings}\label{sec:UVC}

Next we show a connection between Hoffman colorability and vector colorings that extends \cite[Theorem 3.12]{ChiVector} (see \thref{thm:UVCcore}) for the case of strongly regular graphs (\thref{thm:UVCnotHC}) to also cover the graphs of type A of the type system of \cite{SRGcores}, namely the Hoffman colorable strongly regular graphs without a Delsarte clique.

In order to state and prove \thref{thm:UVCnotHC}, we need the following definitions. A graph is \emph{uniquely vector colorable} if any two optimal vector colorings differ by an orthogonal transformation. A graph is \emph{$k$-walk-regular} if for every $m\in \N$ and every $0\le i \le k$, all pairs of vertices $u$ and $v$ at distance $i$ have the same number of walks of length $m$ from $u$ to $v$, so that $(A^m)_{uv}$ only depends on the distance $d(u,v)$, provided that this distance is at most $k$. Note that strongly regular graphs are $k$-walk-regular for any $k$. Lastly, a coloring (vector or standard) is \emph{locally injective} if vertices at distance 2 are assigned different colors. 

\begin{thm}\thlabel{thm:UVCnotHC}
    A non-trivially Hoffman colorable connected 2-walk-regular graph is not uniquely vector colorable.
\end{thm}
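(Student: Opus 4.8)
The plan is to pit two specific optimal vector colorings of $G$ against each other — the one induced by a Hoffman coloring and the ``canonical'' one built from the smallest eigenspace — and to show that unique vector colorability would force them to coincide, which $2$-walk-regularity then forbids unless $G$ is bipartite or regular complete multipartite. \textbf{Setup.} Since $G$ is non-trivially Hoffman colorable, it is a connected $2$-walk-regular graph — hence regular of valency $k$, with $(A^m)_{uv}$ depending only on $d(u,v)$ whenever $d(u,v)\le 2$ — that is neither bipartite nor regular complete multipartite, and $h(G)=\chi(G)\coloneqq h$; by (\ref{eq:sandwich}) we even have $h=\chi_v(G)=\chi_{sv}(G)=\chi(G)$. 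As $G$ is connected and not complete, $\tau\coloneqq\y_{\min}(G)<-1$, and since $h=1-k/\tau$ is rational while $\tau$ is an algebraic integer, $\tau\in\mathbb{Z}$, so $\tau\le-2$; as $G$ is not bipartite, $h\ge3$. By \thref{prop:constantequitable} a Hoffman coloring has color classes of a common size $m\coloneqq n/h$, and $m\ge2$, since $m=1$ would make $G$ complete, hence complete multipartite.

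\textbf{Two optimal vector colorings.} Fix a Hoffman coloring with color classes $C_1,\dots,C_h$ and partition matrix $P$ (so $P_{uv}=1$ exactly when $u,v$ lie in a common class). Let $U$ be the $(h-1)$-dimensional space of vectors that are constant on each $C_i$ and sum to $0$; by the equitability in \thref{prop:constantequitable}, $U$ consists of $\tau$-eigenvectors, so $U\subseteq U_\tau$, the $\tau$-eigenspace of $A$. First, the simplex coloring of the partition — assigning the vertices of a regular $(h-1)$-dimensional simplex to the classes — is a strict vector $h$-coloring whose Gram matrix is, using $h-1=-k/\tau$,
\[
W \;=\; \tfrac{\tau}{k}J+\bigl(1-\tfrac{\tau}{k}\bigr)P \;=\; \tfrac{1}{h-1}(hP-J) \;=\; \tfrac{n}{h-1}\,\Pi_U ,
\]
where $\Pi_U$ is the orthogonal projector onto $U$; since $\chi_{sv}(G)=h$, this coloring is optimal. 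Second, let $E$ be the orthogonal projector onto $U_\tau$ and $d\coloneqq\operatorname{rk}E$ its multiplicity. Walk-regularity gives that $E$ has constant diagonal $d/n$, and $2$-walk-regularity gives that $E_{uv}$ is constant over edges, with value $\tau d/(nk)$ since $AE=\tau E$ yields $k\,E_{uv}=\tau\,E_{uu}$ on an edge. Hence $W'\coloneqq\tfrac nd E=\tfrac nd\,\Pi_{U_\tau}$ has diagonal $1$ and entry $\tau/k=-1/(h-1)$ on edges, so it is again an optimal (indeed strict) vector $h$-coloring.

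\textbf{The reduction and the dichotomy.} Suppose $G$ is uniquely vector colorable. Then any two optimal vector colorings differ by an orthogonal transformation, equivalently have the same Gram matrix; thus $W=W'$. As $W$ and $W'$ are positive scalar multiples of orthogonal projectors, comparing nonzero eigenvalues forces $\tfrac n{h-1}=\tfrac nd$ and then $U=U_\tau$; hence $d=h-1$ and $E=\Pi_U=\tfrac1n(hP-J)$, i.e.\ $nE=hP-J$. Invoking $2$-walk-regularity once more, $E_{uv}$ takes a single value over all pairs at distance exactly $2$. But $(nE)_{uv}=hP_{uv}-1$ equals $h-1$ on monochromatic pairs and $-1$ on bichromatic ones, and $h-1\neq-1$; since $G$ is connected and not complete, pairs at distance $2$ exist, so \emph{either} (A) every pair at distance $2$ is monochromatic, \emph{or} (B) every pair at distance $2$ is bichromatic.

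\textbf{Excluding both cases.} In case (B), distinct vertices of a common class are at distance $\ge3$ (non-adjacent, and no longer at distance $2$), so for $u\neq v$ in one class the closed neighborhoods $N[u],N[v]$ are disjoint; counting over a class gives $m(k+1)\le n=mh$, i.e.\ $h\ge k+1=-\tau(h-1)+1$, which forces $\tau\ge-1$, contradicting $\tau\le-2$. In case (A), a shortest odd cycle is induced, and were its length $2\ell+1\ge5$ then consecutive-but-one vertices on it would be at distance $2$, hence monochromatic, and since $+2$ generates $\mathbb{Z}_{2\ell+1}$ the whole cycle would be monochromatic — impossible, as adjacent vertices are bichromatic. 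So the only odd cycles of $G$ are triangles; if $G$ has none it is bipartite, a contradiction, so $G$ has a triangle. If $\operatorname{diam}(G)=2$ then $G$ is strongly regular (edge-regular and co-edge-regular) and ``distance $2$ monochromatic'' says non-adjacent vertices are monochromatic, i.e.\ $G$ is regular complete multipartite, a contradiction. If $\operatorname{diam}(G)\ge3$ one analyzes the local structure: for an edge $vx$ every neighbor of $v$ outside $N[x]$ is at distance $2$ from $x$, hence in $x$'s class, which with \thref{prop:constantequitable} pins the edge-regularity parameter $a=k+\tau$ and shows $G[N(v)]\cong K_{(h-1)\times(-\tau)}$ for every $v$; pushing this back through $2$-walk-regularity constrains the admissible parameters (and the distance distribution) so severely that $G$ must be regular complete multipartite — the final contradiction. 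The clean part of the argument is the reduction together with case (B) and the long-odd-cycle sub-case of (A); the genuine obstacle is this last step, namely excluding connected $2$-walk-regular Hoffman colorable graphs of diameter $\ge3$ all of whose distance-$2$ pairs are monochromatic under a Hoffman coloring, which I expect to require the local/spectral analysis indicated above (forcing strong restrictions on the spectrum and diameter and then exhausting the remaining possibilities) rather than a one-line argument.
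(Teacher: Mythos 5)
Your construction of the two optimal vector colorings is sound and matches the two colorings the paper uses (the simplex coloring of the Hoffman partition, and the canonical coloring coming from the $\tau$-eigenspace projector, which is \cite[Corollary 4.11]{canonical}), and your reduction of unique vector colorability to $\Pi_U=\Pi_{U_\tau}$ together with the distance-$2$ dichotomy is correct, as are your treatment of case (B), the long-odd-cycle subcase, and the diameter-$2$ subcase of (A). But the proof is not complete: the exclusion of case (A) for graphs of diameter at least $3$ is asserted, not proved (``pushing this back through $2$-walk-regularity constrains the admissible parameters \dots so severely that $G$ must be regular complete multipartite''), and you explicitly flag it as the genuine obstacle. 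That missing step is not a technicality --- it is exactly the content of \cite[Lemma 3.11]{ChiVector}, which states that for a $2$-walk-regular graph the canonical vector coloring is locally injective (unless the graph is complete multipartite). In your language, case (A) says precisely that the canonical coloring identifies the vectors of all distance-$2$ pairs, i.e.\ fails to be locally injective, so what remains to be shown is that lemma (or at least its instance where $U_\tau=U$), and your local analysis ($a=k+\tau$, $G[N(v)]\cong K_{(h-1)\times(-\tau)}$) does not yet yield the contradiction.

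For comparison, the paper's proof avoids the Gram-matrix computation and the case analysis entirely: it observes that local injectivity is invariant under orthogonal transformations, shows via Brooks' theorem that the simplex coloring induced by a Hoffman coloring is \emph{not} locally injective (this replaces your case (B) counting argument and, more importantly, your entire case (A)), and cites \cite[Lemma 3.11]{ChiVector} for the local injectivity of the canonical coloring. So the two colorings cannot differ by an orthogonal transformation. If you want a self-contained argument along your lines, the honest way to close the gap is to prove the relevant instance of that lemma; as written, your proposal reduces the theorem to it without establishing it.
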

\begin{proof}
    Let $G$ be non-trivially Hoffman colorable and 2-walk-regular.
    
    Since $G$ is non-trivially Hoffman colorable, $G$ is neither a complete graph (Hoffman coloring is trivial) nor an odd cycle (among which only $K_3$ is Hoffman colorable). Therefore Brooks' Theorem $\chi(G)\le \Delta(G)$ implies that any Hoffman coloring $f$ of $G$ is not locally injective. As already mentioned in the Preliminaries, by \cite[Lemma 4.1]{KargerEtAl} $f$ induces a vector coloring $\widehat f$. Then $\widehat f$ is not locally injective (as $f$ is not), and by (\ref{eq:sandwich}) $\widehat f$ is optimal.

    Next, since $G$ is 1-walk-regular, \cite[Corollary 4.11]{canonical} gives an optimal vector coloring $\widetilde f$. Since $G$ is 2-walk-regular, $\widetilde f$ is locally injective by \cite[Lemma 3.11]{ChiVector}.

    We thus have two optimal vector colorings $\widehat f, \widetilde f$, of which $\widetilde f$ is locally injective and $\widehat f$ is not. We conclude that $G$ is not uniquely vector colorable.
\end{proof}

In \cite{ChiVector}, unique vector colorability of 2-walk-regular graphs was related to cores. A graph $G$ is a \emph{core} if every graph endomorphism (in other words, adjacency preserving mapping $f:V(G)\to V(G)$) is an automorphism (for more information on cores, see \cite{GodsilRoyle}). In particular, in \cite{ChiVector}, the following is proven.

\begin{thm}[{\cite[Theorem 3.12]{ChiVector}}]\thlabel{thm:UVCcore}
Let $G$ be a 2-walk-regular graph that is neither bipartite nor complete multipartite. If $G$ is uniquely vector colorable, then $G$ is a core.
\end{thm}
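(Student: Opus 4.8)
The plan is to argue by contradiction, producing from a non-core two essentially different optimal vector colorings. Since $G$ is finite, a graph endomorphism is an automorphism as soon as it is injective, so ``$G$ is not a core'' yields an endomorphism $\phi\colon V(G)\to V(G)$ and two distinct vertices $x,y$ with $\phi(x)=\phi(y)$. Let $v$ be an optimal vector coloring of $G$. Because $\phi$ maps edges to edges, $v\circ\phi$ is again a vector $\chi_v(G)$-coloring of $G$, hence optimal by (\ref{eq:sandwich}). Unique vector colorability then forces $v\circ\phi=Q\circ v$ for some orthogonal $Q$, and applying $Q^{-1}$ gives $v(x)=v(y)$. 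So it suffices to prove: a $2$-walk-regular graph whose (essentially unique) optimal vector coloring identifies two distinct vertices must be bipartite or complete multipartite.

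For this reduction I would use the canonical optimal vector coloring. As $G$ is $2$-walk-regular it is $1$-walk-regular, so by \cite[Corollary 4.11]{canonical} it has a canonical optimal vector coloring, which by unique vector colorability we may take to be $v$; by \cite[Lemma 3.11]{ChiVector} this $v$ is locally injective. Hence the identified pair $x,y$ is not at distance $1$ (adjacent vertices get vectors of inner product at most $-1/(\chi_v(G)-1)<1$, so distinct) nor at distance $2$ (local injectivity), forcing $d(x,y)\ge 3$. I would then analyse the partition $\pi$ of $V(G)$ into the fibres of $v$: each fibre is a coclique, any two vertices in a common fibre are at distance at least $3$, and $v$ descends to an \emph{injective} vector coloring $\bar v$ of the quotient $G/\pi$ which is again optimal, since $\bar v$ is a vector $\chi_v(G)$-coloring of $G/\pi$ while the quotient map $G\to G/\pi$ is a homomorphism, so $\chi_v(G/\pi)=\chi_v(G)$. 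Using that on $2$-walk-regular graphs all short walk counts, hence the entries of the projector onto the least eigenspace, depend only on the distance up to $2$, one checks that $\pi$ is equitable, and the aim is to show that the only non-trivial equitable quotients compatible with $2$-walk-regularity are a single edge (so $G$ is bipartite) or a complete graph (so $G$ is complete multipartite) — contradicting the hypothesis.

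I expect this last step — pinning down $G/\pi$ — to be the main obstacle. Knowing only $v(x)=v(y)$ with $d(x,y)\ge 3$ is not enough by itself; the content is to propagate this identification along shortest paths and through the whole graph, exploiting connectedness together with the constancy of short walk counts, until the fibre partition is forced to be ``trivial'' in the complete-multipartite sense. It is exactly here that excluding bipartite and complete multipartite graphs is essential: those are precisely the $2$-walk-regular graphs whose canonical vector coloring is already non-injective (an antipodal pair of vectors, respectively one vector per part) and which genuinely fail to be cores, so they must be ruled out. Establishing the intermediate lemmas that quantify how the fibres of $v$ meet adjacency is where the real work lies, and I would build on the structural results surrounding \cite[Lemma 3.11]{ChiVector} to complete the argument.
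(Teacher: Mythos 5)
First, a remark on scope: the paper does not prove this statement at all --- it is quoted as an external result from \cite[Theorem 3.12]{ChiVector} --- so there is no internal proof to compare against, and I assess your argument on its own terms.

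Your opening reduction is correct and is indeed the standard one: a non-injective endomorphism $\phi$ with $\phi(x)=\phi(y)$, $x\ne y$, turns an optimal vector coloring $v$ into the optimal vector coloring $v\circ\phi$; unique vector colorability gives $v\circ\phi=Q\circ v$ and hence $v(x)=v(y)$; and taking $v$ to be the canonical coloring of \cite[Corollary 4.11]{canonical}, which is locally injective by \cite[Lemma 3.11]{ChiVector} (this is where the exclusion of bipartite and complete multipartite graphs is used), forces $d(x,y)\ge 3$. But the proof stops there. Everything after that --- deriving a contradiction from an identified pair at distance at least $3$ --- is only a programme (fibres, equitable quotients, propagating identifications along shortest paths), and you yourself flag it as ``the main obstacle'' and ``where the real work lies''. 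That is a genuine gap, and the proposed route of pinning down the quotient $G/\pi$ as an edge or a complete graph is not the way to close it: knowing that each fibre of $v$ is a coclique with pairwise distances at least $3$ does not obviously constrain the quotient in the way you want. The missing idea is that you never need to treat identified pairs at distance $\ge 3$ at all. What you have already shown is that $\phi(x)=\phi(y)$ forces $d(x,y)\ge 3$ or $x=y$; since any two vertices in a closed neighbourhood $N[u]$ are at distance at most $2$, this says precisely that $\phi$ is injective on every closed neighbourhood, i.e.\ locally injective. A locally injective endomorphism of a finite connected regular graph is automatically an automorphism: it maps $N(u)$ injectively into $N(\phi(u))$, hence bijectively by a degree count, so the image of $\phi$ is closed under taking neighbours and, by connectedness, equals $V(G)$; a surjective self-map of a finite set is a bijection, and a bijective endomorphism of a finite graph is an automorphism. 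This single observation replaces your entire quotient analysis and completes the proof.
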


For strongly regular graphs, \thref{thm:UVCnotHC} is an extension of \thref{thm:UVCcore}. To see this, we need the following.

\begin{thm}[{\cite[Corollary 4.2]{SRGcores}}]\thlabel{thm:SRGcores}
    A non-complete strongly regular graph is not a core if and only if it is Hoffman colorable and has a Delsarte clique.
\end{thm}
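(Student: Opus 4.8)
The plan is to first reduce the biconditional to a single clean condition, then prove the two directions of that condition. By (\ref{eq:w<h<chi}) every strongly regular graph $G$ satisfies $\omega(G)\le h(G)\le \chi(G)$; moreover $G$ is Hoffman colorable exactly when $h(G)=\chi(G)$, and $G$ has a Delsarte clique exactly when it has a clique of size $h(G)$, i.e.\ when $\omega(G)=h(G)$ (cliques cannot exceed $h(G)$ by the Delsarte bound). Hence ``$G$ is Hoffman colorable and has a Delsarte clique'' is equivalent to $\omega(G)=h(G)=\chi(G)$, which, because of the squeeze $\omega(G)\le h(G)\le \chi(G)$, is in turn equivalent simply to $\omega(G)=\chi(G)$. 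So it suffices to show: a non-complete strongly regular graph $G$ is not a core if and only if $\omega(G)=\chi(G)$.

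For the direction that $\omega(G)=\chi(G)$ implies $G$ is not a core, set $r=\omega(G)=\chi(G)$. Since $G$ is non-complete it has a non-edge, so a proper coloring of $G$ with $n-1$ colors exists and $r=\chi(G)\le n-1$. Fix a proper coloring $c\colon V(G)\to\{1,\dots,r\}$ and vertices $v_1,\dots,v_r$ spanning a clique, and define $\phi\colon V(G)\to V(G)$ by $\phi(u)=v_{c(u)}$. Adjacent vertices receive distinct colors and are therefore sent to distinct, hence adjacent, vertices of the clique, so $\phi$ is an endomorphism; its image has only $r<n$ vertices, so $\phi$ is not an automorphism and $G$ is not a core. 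Via the reduction, this is the ``if'' part of the statement.

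For the converse, assume the non-complete strongly regular graph $G$ is not a core and let $G^{*}$ be its core, so there are homomorphisms $G\to G^{*}$ and $G^{*}\hookrightarrow G$ (the inclusion) with $|V(G^{*})|<n$. Since a homomorphism $A\to B$ satisfies $\chi(A)\le\chi(B)$ and maps cliques injectively (so $\omega(A)\le\omega(B)$), applying this to $G\to G^{*}$ and to $G^{*}\hookrightarrow G$ yields $\chi(G^{*})=\chi(G)$ and $\omega(G^{*})=\omega(G)$. The crux of the proof is the structural claim that $G^{*}$ is a complete graph; granting it, $\omega(G)=\omega(G^{*})=|V(G^{*})|=\chi(G^{*})=\chi(G)$, and the reduction finishes the argument.

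The main obstacle is thus precisely this structural claim: the core of a non-complete strongly regular graph which is not itself a core must be complete, and this is where strong regularity is indispensable. I would attack it as follows. Fix a non-injective endomorphism $f$ of $G$; its fibers $f^{-1}(w)$ partition $V(G)$ into $|V(G^{*})|$ cocliques, so $\chi(G)\le |V(G^{*})|$ and $n\le |V(G^{*})|\,\a(G)$. Against this one plays the clique--coclique bound $\omega(G)\,\a(G)\le n$, valid since a strongly regular graph lies in a $2$-class association scheme, together with eigenvalue interlacing, which gives $\y_{\min}(G^{*})\ge \y_{\min}(G)=\tau$ and hence controls the spectrum --- and so the Hoffman number and the clique--coclique behaviour --- of the induced subgraph $G^{*}$. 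Combining these, one aims to squeeze $|V(G^{*})|$ down to $\omega(G)$, forcing $G^{*}=K_{\omega(G)}$. The genuinely delicate point is that $G^{*}$ need not a priori be regular or walk-regular, so this last squeeze has to be carried out by hand: pick non-adjacent $u,v$ with $f(u)=f(v)$ and use the parameters $a$ and $c$ of $G$ to control how the fibers of $f$ meet neighborhoods and common-neighbor sets, iterating $f$ until its image induces a clique. Alternatively one can appeal directly to the detailed analysis of endomorphisms of strongly regular graphs that underlies the type system of \cite{SRGcores}.
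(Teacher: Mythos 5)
The paper does not prove this theorem; it imports it verbatim as \cite[Corollary 4.2]{SRGcores}, so the real question is whether your argument is self-contained. It is not. Your reduction of the statement to ``$G$ is not a core iff $\omega(G)=\chi(G)$'' via the squeeze $\omega(G)\le h(G)\le\chi(G)$ is correct, and your proof of the direction ``$\omega(G)=\chi(G)\Rightarrow G$ is not a core'' (retract onto a maximum clique along an optimal coloring) is the standard, valid argument. But the entire difficulty of the converse is concentrated in your ``structural claim'' that the core of a non-complete strongly regular non-core is complete, and you do not prove it. That claim is essentially the main theorem of \cite{SRGcores} (strongly regular graphs are \emph{pseudo-cores}, extending the Cameron--Kazanidis result for rank-$3$ graphs); it is a genuinely hard result, and deferring to ``the detailed analysis that underlies the type system of \cite{SRGcores}'' is circular, since the statement you are asked to prove is a corollary of exactly that analysis.

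Moreover, the tools you list for attacking the structural claim point the wrong way. The fibers of a non-injective endomorphism $f\colon G\to G^{*}$ are indeed cocliques, giving $n\le |V(G^{*})|\,\alpha(G)$, and the clique--coclique bound gives $\omega(G)\,\alpha(G)\le n$; combining these yields $\omega(G)\le |V(G^{*})|$, whereas you need the reverse inequality $|V(G^{*})|\le\omega(G)$ to force $G^{*}$ to be a complete graph on a maximum clique. Interlacing ($\y_{\min}(G^{*})\ge\y_{\min}(G)$) likewise does not by itself force the image to be a clique. The actual proof in \cite{SRGcores} requires a careful local analysis of how an endomorphism interacts with the parameters $a$ and $c$ (and with Delsarte cliques), which is precisely the step your sketch acknowledges but does not carry out. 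As written, the proposal establishes one direction and correctly identifies, but does not bridge, the gap in the other.
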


\thref{thm:UVCnotHC} provides a new and more general sufficient condition for non-unique vector colorability than \thref{thm:UVCcore} in the case of strongly regular graphs. Indeed, if $G$ is not a core (the sufficient condition for non-unique vector colorability from \thref{thm:UVCcore}), then by \thref{thm:SRGcores} it is Hoffman colorable (the sufficient condition from \thref{thm:UVCnotHC}).

The question of which strongly regular graphs are cores but not uniquely vector colorable (so the non-uniquely vector colorable graphs for which non-unique vector colorability does not follow from \thref{thm:UVCcore}) was posed in \cite{ChiVector}. \thref{thm:UVCnotHC} provides examples for this. By \thref{thm:SRGcores}, the graphs for which non-unique vector colorability does follow from \thref{thm:UVCnotHC}, but does not follow from \thref{thm:UVCcore}, are precisely the strongly regular graphs of type A from the type system of \cite{SRGcores}: the Hoffman colorable strongly regular graphs that do not have a Delsarte clique. For instance, the following strongly regular graphs (with combinatorial parameters) are of type A:
\begin{itemize}
    \item the Shrikhande graph: $(16,6,2,2)$,
    \item the Schläfli graph: $(27,16,10,8)$,
    \item the three Chang graphs: $(28,12,6,4)$,
    \item the complements of six Latin square graphs: $(36,20,10,12)$.
\end{itemize}
In fact, for the parameter set $(36,20,10,12)$ the twelve non-uniquely vector colorable graphs (see \cite[Table 1]{ChiVector}) are precisely the complements of the twelve Latin square graphs of order 6. All of these are Hoffman colorable, but only six of them are cores. \thref{thm:UVCnotHC} therefore has a broader applicability than \thref{thm:UVCcore} for strongly regular graphs.

Following \cite[Section 3.5]{ChiVector} more closely, we see that we can actually prove in the same way that Taylor graphs are not Hoffman colorable. A \emph{Taylor graph} is a distance-regular graph with intersection array $\{k,\mu,1;1,\mu,k\}$ for some $k$ and $\mu$. A Taylor graph is of diameter 3, so never complete multipartite. A Taylor graph is bipartite if and only if $\mu=k-1$, if and only if we can obtain the graph by removing a perfect matching from a complete bipartite graph.
\begin{cor}\thlabel{cor:Taylor}
    If a Taylor graph is Hoffman colorable, then it is bipartite.
\end{cor}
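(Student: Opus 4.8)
The plan is to mimic the proof of \thref{thm:UVCnotHC} almost verbatim, using the structural features of Taylor graphs listed just before the corollary. First I would observe that a Taylor graph $G$ that is Hoffman colorable is either bipartite (in which case we are done) or non-bipartite; so assume $G$ is non-bipartite and Hoffman colorable, and derive a contradiction. Since a Taylor graph has diameter $3$, it is not complete multipartite, and since it is non-bipartite and Hoffman colorable it is not a complete graph either; moreover it is not an odd cycle (diameter $3$, valency $k\ge 2$ with the given intersection array forces enough vertices and regularity that it cannot be $C_{2r+1}$ — the only vertex-transitive candidates with that structure are excluded). Hence Brooks' Theorem $\chi(G)\le\Delta(G)=k$ applies, and since $h(G)=\chi(G)$ for a Hoffman colorable graph, we get that any Hoffman coloring $f$ uses at most $k$ colors; as in the proof of \thref{thm:UVCnotHC}, this forces $f$ to fail local injectivity, i.e. some pair of vertices at distance $2$ receives the same color. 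By \cite[Lemma 4.1]{KargerEtAl}, $f$ induces a vector coloring $\widehat f$, which is optimal by (\ref{eq:sandwich}) and which, like $f$, is not locally injective.

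Next I would invoke the walk-regularity ingredients. A distance-regular graph is in particular $1$-walk-regular, so \cite[Corollary 4.11]{canonical} yields an optimal \emph{canonical} vector coloring $\widetilde f$; and a distance-regular graph is also $2$-walk-regular, so \cite[Lemma 3.11]{ChiVector} guarantees that $\widetilde f$ \emph{is} locally injective. Now I would combine these: if $G$ were additionally uniquely vector colorable, then $\widehat f$ and $\widetilde f$ would have to differ by an orthogonal transformation, which is impossible since local injectivity is preserved by orthogonal transformations and $\widehat f$ fails it while $\widetilde f$ satisfies it. However, for the corollary I do not actually need unique vector colorability as an extra hypothesis — the contradiction is already internal: a non-bipartite Hoffman colorable Taylor graph simultaneously admits an optimal vector coloring that is locally injective (the canonical one $\widetilde f$) and one that is not ($\widehat f$), which is consistent and so does not itself contradict anything. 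So I should be more careful: the genuinely correct route is that Hoffman colorability of $G$ together with $G$ being a $2$-walk-regular non-bipartite non-complete-multipartite graph means $h(G)=\chi(G)$, and then the argument of \thref{thm:UVCnotHC} shows $G$ is not uniquely vector colorable — but that is the wrong target. Let me reconsider: the sentence preceding the corollary says we can ``prove in the same way that Taylor graphs are not Hoffman colorable,'' so the intended statement, once bipartiteness is quotiented out, is that a non-bipartite Taylor graph is simply \emph{not} Hoffman colorable at all, and the contradiction must come from Brooks plus a parameter count, not from vector colorings competing.

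Thus the real argument is: suppose $G$ is a non-bipartite Hoffman colorable Taylor graph. By Brooks, $\chi(G)\le k$, so $h(G)\le k$, i.e. $1-\y_{\max}/\y_{\min}\le k$. A Taylor graph with intersection array $\{k,\mu,1;1,\mu,k\}$ has known eigenvalues: $k$, $-1$, and two others that are the roots of $x^2-(\mu-1)x-k=0$, with multiplicities determined by the array; in particular $\y_{\min}=\frac{(\mu-1)-\sqrt{(\mu-1)^2+4k}}{2}$. Plugging into the Hoffman bound and using non-bipartiteness ($\mu\ne k-1$, equivalently $-1$ is not the least eigenvalue in the bipartite-forcing way) gives a numerical inequality in $k$ and $\mu$ that I would show is violated for all admissible Taylor parameters except in the bipartite case — this parameter estimate is the step I expect to be the main obstacle, since it requires knowing exactly which $(k,\mu)$ correspond to genuine Taylor graphs (or at least a clean inequality ruling out Hoffman colorability directly). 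Concretely, I would compute $h(G)=1-k/\y_{\min}(G)=1+\frac{2k}{\sqrt{(\mu-1)^2+4k}-(\mu-1)}$, simplify to $h(G)=1+\frac{\sqrt{(\mu-1)^2+4k}+(\mu-1)}{2}$, and then compare against $\chi(G)$: since a Taylor graph contains a clique of size $a+2=\mu+1$ is false in general, I would instead use that a locally injective coloring would need at least $k+1$ colors while Brooks caps $\chi(G)$ at $k$, forcing the common color at distance $2$ exactly as above, and then run the clean version of the vector-coloring argument of \thref{thm:UVCnotHC} to get that $G$ is not uniquely vector colorable — and finally appeal to the fact that a non-bipartite $2$-walk-regular graph that is a core-obstruction must... at which point the paper's intended proof likely simply says ``the same proof as \thref{thm:UVCnotHC} shows a Hoffman coloring and the canonical vector coloring disagree on local injectivity, hence''—so I would follow \cite[Section 3.5]{ChiVector} to conclude that for a non-bipartite Taylor graph the canonical vector coloring is locally injective but a Hoffman coloring is not, and reconcile these only if $\chi_v(G)<h(G)$, contradicting (\ref{eq:sandwich}); hence $G$ is not Hoffman colorable, proving the corollary.
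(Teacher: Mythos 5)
There is a genuine gap, and you actually brush against it yourself: the missing ingredient is the external fact that \emph{every Taylor graph is uniquely vector colorable} (\cite[Theorem 3.14]{ChiVector}). The paper's proof is a two-line application of the contrapositive of \thref{thm:UVCnotHC}: since Taylor graphs are $2$-walk-regular and uniquely vector colorable, \thref{thm:UVCnotHC} forces any Hoffman coloring to be trivial, and since a Taylor graph has diameter $3$ it is not complete multipartite, so trivial means bipartite. Your proposal reconstructs the internal machinery of \thref{thm:UVCnotHC} (Brooks' theorem, the non-locally-injective vector coloring $\widehat f$ induced by a Hoffman coloring, the locally injective canonical coloring $\widetilde f$ from \cite[Corollary 4.11]{canonical} and \cite[Lemma 3.11]{ChiVector}), and you correctly observe midway that the coexistence of $\widehat f$ and $\widetilde f$ ``is consistent and so does not itself contradict anything'' --- it only shows $G$ is not uniquely vector colorable. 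That is precisely where you needed to cite the unique vector colorability of Taylor graphs to close the loop, and you never do.

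Instead, the proposal ends with two dead ends. The attempted eigenvalue/parameter estimate via Brooks' theorem is left incomplete and you flag it yourself as the main obstacle; it is not how the paper argues and is not needed. Worse, the final sentence claims the two colorings can be ``reconciled only if $\chi_v(G)<h(G)$, contradicting (\ref{eq:sandwich})'' --- this is a non sequitur: two optimal vector colorings that disagree on local injectivity say nothing about $\chi_v(G)$ versus $h(G)$; they say only that the optimal vector coloring is not unique. Without the citation that Taylor graphs \emph{are} uniquely vector colorable, no contradiction is reached, so the proof as written does not go through.
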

\begin{proof}
    By \cite[Theorem 3.14]{ChiVector}, any Taylor graph is uniquely vector colorable. Since Taylor graphs are 2-walk-regular, any Hoffman coloring is trivial by \thref{thm:UVCnotHC}, implying that the graph is bipartite.
\end{proof}

\section{Characterizations for strongly regular graphs and a bound for the chromatic number of co-edge-regular graphs}\label{sec:characterization}

Here we investigate the Hoffman number in terms of the average parameters introduced in \cite{Neumaier}. Using this, we provide a new characterization of strongly regular graphs among regular graphs (\thref{thm:characterization}), namely these are precisely those regular graphs attaining equality in (\ref{eq:hhc<n}). Recall that (\ref{eq:hhc<n}) can also be shown using the Lov\'{a}sz $\vartheta$-number (see \thref{rmk:Lovasz}). Also, observe that (\ref{eq:hhc=n}) shows that for strongly regular graphs we have equality in (\ref{eq:hhc<n}); what we show in \thref{thm:characterization} is that strongly regular graphs are the only such regular graphs. Furthermore, we improve Hoffman's bound for the case of co-edge-regular graphs (\thref{thm:co-edge}). We conclude this section by discussing some general consequences of the new results for Hoffman colorable regular graphs. For instance, from \thref{thm:co-edge} it follows that Hoffman colorable co-edge-regular and Neumaier graphs must be strongly regular (\thref{cor:sufficient}(i) and (ii)).

\subsubsection*{A characterization of strong regularity using the Hoffman number}

In order to prove \thref{thm:characterization}, we use average parameters based on strongly regular graphs. Let $G$ be a $k$-regular graph on $n$ vertices, and assume for the remainder of this section that $G$ is neither empty nor complete. Let $\av a$ be the average number of common neighbors of two adjacent vertices of $G$, and similarly let $\av c$ be the average number of common neighbors of two distinct non-adjacent vertices. Let $\av \tau$ and $\av \theta$ be the smallest and largest root respectively of the polynomial
\begin{equation}\label{eq:avtheta}
g(x) = x^2+(\av c - \av a)x + (\av c - k),
\end{equation}
which is just (\ref{eq:srgeigenvalues}) with $\av a $ and $\av c$ instead of $a$ and $c$. Now let $\av s $ be $-k/\av \tau$, as in Section \ref{sec:geompara}.

The following straightforward result, which shows that these average parameters behave much in the same way as the parameters for strongly regular graphs, is later used for proving \thref{lem:(s+1)(ols+1)=n}.
\begin{lem}\thlabel{lem:average}
    Let $G$ be a $k$-regular graph, and let $\av a$, $\av c$, $\av \tau$, $\av \theta$, and $\av s$ be defined as above. Let $\av {\ol a}$, $\av{\ol c}$, $\av {\ol \tau}$, $\av {\ol \theta}$, and $\av {\ol s}$ those average parameters for the complement of $G$. Then the following hold.
    \begin{enumerate}[label=(\roman*)]
        \item $\av c(n-k-1)=k(k-\av a-1)$,
        \item $\av {\ol a}=n-2-2k+\av c$ and $\av {\ol c}=n-2k+\av a$,
        \item $\av{\ol \tau}=-1-\av \theta$ and $\av{\ol\theta}=-1-\av\tau$.
        \end{enumerate}
\end{lem}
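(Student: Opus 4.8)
The plan is to prove the three identities of \thref{lem:average} by direct double counting, mirroring the classical derivations for strongly regular graphs but carried out "on average". Throughout, I fix a $k$-regular graph $G$ on $n$ vertices with $e$ edges, so $2e = nk$, and I let $p$ denote the number of (unordered) non-adjacent pairs of distinct vertices, i.e. $p = \binom{n}{2} - e$. The key observation is that $\av a$ and $\av c$ are by definition the averages $\av a = \tfrac{1}{e}\sum_{uv \in E(G)} |N(u)\cap N(v)|$ and $\av c = \tfrac{1}{p}\sum_{uv \notin E(G),\, u\neq v} |N(u)\cap N(v)|$, so any identity among the average parameters will follow by summing a pointwise identity over all pairs and dividing.

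For part (i), the plan is to count, in two ways, the number of paths of length $2$ in $G$ (ordered triples $(x,y,z)$ with $x\sim y \sim z$, $x \neq z$). On one hand this equals $n\binom{k}{\,}$... more precisely $n \cdot k(k-1)$ counting the middle vertex $y$ first. On the other hand, grouping by the endpoints $\{x,z\}$: each adjacent endpoint-pair contributes $|N(x)\cap N(z)|$ twice (for the two orderings) and summing over adjacent pairs gives $2e\av a$, while each non-adjacent endpoint-pair contributes $2|N(x)\cap N(z)|$ and summing gives $2p\av c$. Equating $nk(k-1) = 2e\av a + 2p\av c = nk\av a + 2p\av c$ and substituting $2p = n(n-1) - nk$ yields, after dividing by $n$, the relation $k(k-1) = k\av a + (n-1-k)\av c$, which rearranges to (i). The main subtlety here is bookkeeping the factor-of-two conventions consistently; it is routine but must be done carefully.

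For part (ii), the plan is to translate the complement relations pairwise. For a fixed pair $\{u,v\}$, the common neighbors in $\ol G$ are the vertices adjacent to neither $u$ nor $v$ in $G$; by inclusion–exclusion the count is $n - 2 - |N_G(u)\cup N_G(v)| = n - 2 - (|N_G(u)| + |N_G(v)| - |N_G(u)\cap N_G(v)| - \varepsilon)$, where $\varepsilon$ accounts for whether $u,v$ lie in each other's neighborhoods. Splitting into the adjacent case ($\varepsilon$ contributes a $-2$ correction since each of $u,v$ is counted in the other's neighborhood) and the non-adjacent case, and using $|N_G(u)| = |N_G(v)| = k$, one gets: if $uv \in E(G)$ then $|N_{\ol G}(u)\cap N_{\ol G}(v)| = n - 2k + |N_G(u)\cap N_G(v)|$, and if $uv \notin E(G)$ then $|N_{\ol G}(u)\cap N_{\ol G}(v)| = n - 2 - 2k + |N_G(u)\cap N_G(v)|$. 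Now averaging the first identity over edges of $G$ (which are the non-edges of $\ol G$) gives $\av{\ol c} = n - 2k + \av a$, and averaging the second over non-edges of $G$ (the edges of $\ol G$) gives $\av{\ol a} = n - 2 - 2k + \av c$, which is exactly (ii).

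Finally, for part (iii), the plan is purely algebraic: by the definition (\ref{eq:avtheta}), $\av\tau + \av\theta = \av a - \av c$ and $\av\tau\av\theta = \av c - k$, and likewise $\av{\ol\tau} + \av{\ol\theta} = \av{\ol a} - \av{\ol c}$, $\av{\ol\tau}\av{\ol\theta} = \av{\ol c} - (n-1-k)$ (since the complement is $(n-1-k)$-regular). Substituting (ii) into these and checking that $x = -1-\av\theta$ and $x = -1-\av\tau$ are precisely the two roots of $\ol g$ — equivalently, verifying $(-1-\av\theta) + (-1-\av\tau) = \av{\ol a} - \av{\ol c}$ and $(-1-\av\theta)(-1-\av\tau) = \av{\ol c} - (n-1-k)$ — reduces, via (ii) and (i), to an identity in $n,k,\av a,\av c$ that holds by part (i). I expect the root-identification computation in (iii) to be the only place where one must be slightly careful to see that the needed polynomial identity does not merely follow from (ii) alone but genuinely uses (i); this is the (minor) crux, and the rest is routine substitution. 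An alternative, perhaps cleaner, route for (iii) is to observe that $g(x)$ is the minimal polynomial one would associate to the "average" $2$-class scheme and that passing to the complement corresponds to the substitution $x \mapsto -1-x$ on the non-principal eigenvalues exactly as in (\ref{eq:srgeigenvalues}); I would present whichever of the two is shorter once the constants are pinned down.
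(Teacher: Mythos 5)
Your proposal is correct and follows essentially the same route as the paper: part (i) by double counting paths of length $2$ (the paper counts the ordered paths with non-adjacent endpoints, which is the same computation), part (ii) by the pointwise inclusion--exclusion identity averaged over edges and non-edges, and part (iii) by the substitution $x\mapsto -1-x$, which is exactly the paper's observation that $\ol g(x)=g(-x-1)$. One small correction to your commentary on (iii): the root identification does \emph{not} genuinely use part (i) --- expanding $g(-x-1)$ and comparing coefficients with $\ol g$ requires only the two relations of part (ii), as the paper's one-line argument for (iii) indicates.
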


\begin{proof}
    Part (i) follows from double counting the set $\{(u,v,w) \in V(G)^3: u \sim v, v \sim w, u \not \sim w, u\ne w\}$. Part (ii) follows by applying the same reasoning as for the corresponding statement for strongly regular graphs and averaging over every (non)-edge. For Part (iii), let $g(x)$ be the polynomial from (\ref{eq:avtheta}), and let $\ol g (x)$ be that for the complement of $G$ (which is $(n-k-1)$-regular). From Part (ii) it follows that $\ol g(x)=g(-x-1)$, from which Part (iii) follows.
\end{proof}

\begin{rmk}\thlabel{rmk:sameparameters}
Even though we define the average parameters in a different way than in \cite{Neumaier}, we can still apply the results from that paper: from \thref{lem:average} it follows that the average parameters defined here are equivalent to the parameters defined in \cite{Neumaier}; the parameters $\av a$, $\av c$, $\av \tau$, $\av \theta$, and $\av s$ correspond to $\ol \y$, $\ol \mu$, $\theta_m$, $\theta_M$, and $\ol s$ respectively.
\end{rmk}

Note that by either (\ref{eq:CPPG})-(\ref{eq:GPcomp}) or (\ref{eq:hhc=n})-(\ref{eq:h=s+1}) we have $(s+1)(\ol s+1)=n$ for strongly regular graphs. This remains true when using the average parameters for general regular graphs, as we now show.

\begin{lem}\thlabel{lem:(s+1)(ols+1)=n}
    Let $G$ be a regular graph, with $\av s$ defined as above. Then $(\av s+1)(\av{\ol s}+1)=n$.
\end{lem}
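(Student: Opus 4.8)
The plan is to reduce $(\av s+1)(\av{\ol s}+1)=n$ to an identity among the average combinatorial parameters, just as in the strongly regular case, and then verify that identity using Lemma~\ref{lem:average}. Recall $\av s = -k/\av\tau$ and $\av{\ol s} = -(n-k-1)/\av{\ol\tau}$, where $\av{\ol\tau} = -1-\av\theta$ by Lemma~\ref{lem:average}(iii). Since $\av\tau$ and $\av\theta$ are the two roots of $g(x)=x^2+(\av c-\av a)x+(\av c-k)$, we have the Vieta relations $\av\tau+\av\theta = \av a-\av c$ and $\av\tau\av\theta = \av c - k$. I would first rewrite $\av{\ol s}+1 = -(n-k-1)/\av{\ol\tau} + 1 = (\av{\ol\tau}+n-k-1)/\av{\ol\tau} = (n-k-\av\theta)/(-1-\av\theta)$, and similarly $\av s + 1 = (-k+\av\tau)/\av\tau$ wait — better $\av s+1 = (\av\tau - k)/\av\tau$? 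No: $\av s+1 = -k/\av\tau + 1 = (\av\tau - k)/\av\tau$. Hmm, $\av\tau<0$ and $-k/\av\tau>0$, so $\av s+1>0$; fine.

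Next I would multiply the two expressions and clear denominators: the claim $(\av s+1)(\av{\ol s}+1)=n$ becomes, after multiplying through by $\av\tau(-1-\av\theta)=\av\tau\av{\ol\tau}$,
\[
(\av\tau-k)(n-k-\av\theta) \;=\; n\,\av\tau(-1-\av\theta).
\]
Now I would expand both sides and substitute the Vieta relations $\av\tau+\av\theta=\av a-\av c$ and $\av\tau\av\theta=\av c-k$ to eliminate the symmetric functions of $\av\tau,\av\theta$, keeping $n,k,\av a,\av c$ and at most one of $\av\tau,\av\theta$. The terms linear in a single eigenvalue should cancel once we also invoke the degree identity $\av c(n-k-1)=k(k-\av a-1)$ from Lemma~\ref{lem:average}(i), which is precisely the relation that makes the strongly-regular computation work. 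So the verification is: expand, substitute Vieta, substitute Lemma~\ref{lem:average}(i), and check the polynomial identity in $n,k,\av a,\av c$ collapses to $0=0$.

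The only mild subtlety — and the place I'd be most careful — is that, unlike the genuinely strongly regular case, $\av\tau$ and $\av\theta$ need not be the eigenvalues of any matrix, so I cannot appeal to spectral facts; everything must come purely from the defining polynomial $g$ and the averaging identities of Lemma~\ref{lem:average}. But since all three parts of Lemma~\ref{lem:average} are exactly the strongly-regular identities with averaged parameters, and the strongly-regular derivation of $(s+1)(\ol s+1)=n$ uses only those identities, the same computation goes through verbatim. In fact a cleaner route avoids eigenvalues altogether: from $g(x)=(x-\av\tau)(x-\av\theta)$ one gets $g(-1) = (-1-\av\tau)(-1-\av\theta) = \av{\ol\tau}\,\av\theta\cdot(-1)$ — more usefully, evaluate $(\av s+1)(\av{\ol s}+1) = \frac{(k-\av\tau)(n-k-1-\av{\ol\tau})}{\av\tau\,\av{\ol\tau}}$ and note $k-\av\tau$ and the analogous factor are values of linear forms at $\av\tau$; I would pick whichever bookkeeping is shortest. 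Either way, no genuine obstacle arises, only the routine algebra of substituting Lemma~\ref{lem:average}(i)–(iii).
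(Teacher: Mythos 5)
Your overall strategy is exactly the paper's: write $\av s+1$ and $\av{\ol s}+1$ as rational functions of $\av\tau$ and $\av\theta$, multiply, clear denominators, and reduce everything to the Vieta relations for $g$ together with \thref{lem:average}(i). The paper carries this out compactly by noting that $g(k)=(k-\av\tau)(k-\av\theta)=n\av c$ (an immediate consequence of \thref{lem:average}(i)), so the numerator collapses to $n(\av\tau-k)+g(k)=n(\av\tau+\av c-k)$ while the denominator is $\av\tau(\av\theta+1)=\av\tau+\av\tau\av\theta=\av\tau+\av c-k$. The one thing you must repair is a sign slip in your reduction: since $\av{\ol\tau}=-1-\av\theta$, one has
\[
\av{\ol s}+1=\frac{\av{\ol\tau}-(n-k-1)}{\av{\ol\tau}}=\frac{n+\av\theta-k}{\av\theta+1},
\]
so the cleared-denominator identity to verify is $(\av\tau-k)(n+\av\theta-k)=n\av\tau(\av\theta+1)$, not $(\av\tau-k)(n-k-\av\theta)=n\av\tau(-1-\av\theta)$; the latter is false (for the Petersen graph it reads $-30=40$) and would not collapse to $0=0$ as you claim. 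With the corrected identity, your expand-then-substitute plan using Vieta and \thref{lem:average}(i) goes through and is essentially verbatim the paper's computation.
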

\begin{proof}
    Note that $g(k)=n\av c$ by \thref{lem:average}(i) with $g$ the polynomial from (\ref{eq:avtheta}). We compute
    \begin{align*}
        (\av s+1 ) ( \av {\ol s}+1) &= \Big (\frac{\av \tau - k}{\av \tau} \Big ) \Big ( \frac{n + \av \theta -k}{\av \theta+1} \Big )=\frac{n (\av \tau -k) + g(k)}{\av \tau ( \av \theta+1)}=\frac{n(\av \tau + \av c-k)}{\av \tau + \av \theta \av \tau}=n,
    \end{align*}
    using \thref{lem:average}(iii) and (\ref{eq:avtheta}).
\end{proof}

For irregular graphs, it is possible to show a statement similar to \thref{lem:average} regarding the average degree $\av k$, but with strict inequalities instead of equalities (with $>$ everywhere, except for the second equality of Part (ii) which has $<$). Also \thref{lem:(s+1)(ols+1)=n} holds for irregular graphs with $>$ instead of $=$ for irregular graphs.

We are now ready to state and prove the new characterization of strongly regular graphs using the Hoffman number.

\begin{thm}\thlabel{thm:characterization}
    Let $G$ be a regular graph. Then $h(G)h(\ol G)\le n$, with equality if and only if $G$ is strongly regular.
\end{thm}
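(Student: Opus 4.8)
I would rewrite $h(G)h(\ol G)$ entirely in terms of the eigenvalues of $A$ and then reduce the claim to a single quadratic inequality coming from the trace identities $\mathrm{tr}(A)=0$ and $\mathrm{tr}(A^2)=nk$. We may assume $G$ is neither empty nor complete (otherwise the statement is vacuous or degenerate). Write the eigenvalues of $A$ as $k=\lambda_1\ge\lambda_2\ge\cdots\ge\lambda_n$, so $\lambda_1=k$ and $\lambda_n=\y_{\min}(G)=:\mu<0$. The complement $\ol G$ is $(n-1-k)$-regular, and since $J-I-A$ acts as $-I-A$ on the span $\mathbf 1^{\perp}$ of the non-principal eigenvectors of $A$, the eigenvalues of $\ol G$ other than its valency are $-1-\lambda_2,\dots,-1-\lambda_n$; hence $\y_{\min}(\ol G)=-1-\lambda_2$. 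Also $\lambda_2>-1$, since otherwise $\y_{\min}(\ol G)\ge 0$, forcing $\ol G$ edgeless and $G$ complete. Thus both $-\mu$ and $1+\lambda_2$ are positive and
\[
h(G)\,h(\ol G)=\Big(1-\frac{k}{\mu}\Big)\Big(1-\frac{n-1-k}{-1-\lambda_2}\Big)=\frac{(k-\mu)\,(n-k+\lambda_2)}{(-\mu)(1+\lambda_2)}.
\]

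Clearing denominators, $h(G)h(\ol G)\le n$ is equivalent to $n(-\mu)(1+\lambda_2)-(k-\mu)(n-k+\lambda_2)\ge 0$, and a short expansion rewrites the left-hand side as $-\big[(n-1)\mu\lambda_2+k(n-k+\mu+\lambda_2)\big]$. The crucial identity is
\[
(n-1)\mu\lambda_2+k(n-k+\mu+\lambda_2)=-\sum_{i=2}^{n}(\lambda_i-\mu)(\lambda_2-\lambda_i),
\]
obtained by expanding the sum and plugging in $\sum_{i=2}^{n}\lambda_i=-k$ (from $\mathrm{tr}(A)=0$) and $\sum_{i=2}^{n}\lambda_i^2=nk-k^2$ (from $\mathrm{tr}(A^2)=nk$). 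Since $\mu\le\lambda_i\le\lambda_2$ for all $i\ge 2$, every summand is nonnegative, so the right-hand side is $\le 0$ and the inequality follows. (One could instead route this through $h(G)\le\av s+1$, i.e.\ the Neumaier-type bound $\y_{\min}(G)\le\av\tau$, combined with \thref{lem:(s+1)(ols+1)=n}, which keeps everything inside the average-parameter framework of this section.)

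For equality, note that $h(G)h(\ol G)=n$ holds exactly when $\sum_{i=2}^{n}(\lambda_i-\mu)(\lambda_2-\lambda_i)=0$, i.e.\ when $\lambda_i\in\{\mu,\lambda_2\}$ for every $i\ge 2$; I expect getting this case right to be the main obstacle. In particular this condition is strictly stronger than ``$G$ has at most three distinct eigenvalues'': graphs like $mK_{r,r}$ with $m,r\ge 2$ have three distinct eigenvalues, but there $\lambda_2=k$ and the middle eigenvalue $0\notin\{\mu,\lambda_2\}$, so equality fails. If $G$ is connected then $\lambda_2<k$, so $G$ has at most three distinct eigenvalues; being regular and non-complete it then has exactly three and hence is strongly regular. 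If $G$ is disconnected then $\lambda_2=k$, so $G$ has at most two distinct eigenvalues, which (being regular and non-empty) forces each component to be a complete graph $K_{k+1}$, i.e.\ $G\cong mK_{k+1}$ with $m\ge 2$, again strongly regular. Conversely, every strongly regular graph has all its non-principal eigenvalues among $\{\theta,\tau\}=\{\lambda_2,\mu\}$ (with $\theta=k$ in the imprimitive disconnected case), so equality holds there. Assembling these three steps proves the theorem.
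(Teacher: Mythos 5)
Your proof is correct, and it takes a genuinely different route from the paper's. The paper stays entirely inside the average-parameter framework of this section: it invokes \cite[Theorem 2.14(a)]{Neumaier} for $\y_{\min}(G)\le\av\tau$ (with equality if and only if $G$ is strongly regular), deduces $h(G)\le\av s+1$, and multiplies this with the same inequality for $\ol G$ via \thref{lem:(s+1)(ols+1)=n}; the equality characterization is thus inherited from the cited result as a black box. You instead work directly with the spectrum: after expressing $h(G)h(\ol G)$ through $\y_{\min}(\ol G)=-1-\lambda_2$, you reduce the inequality to the manifestly nonnegative sum $\sum_{i=2}^{n}(\lambda_i-\mu)(\lambda_2-\lambda_i)$ using only $\mathrm{tr}(A)=0$ and $\mathrm{tr}(A^2)=nk$ (I checked the algebra; both the expansion of the cleared-denominator expression and the trace identity are right), and you read off equality as ``every non-principal eigenvalue lies in $\{\lambda_2,\mu\}$'', which you correctly convert to strong regularity by splitting into the connected case (at most three distinct eigenvalues, hence exactly three since $G$ is not complete) and the disconnected case ($\lambda_2=k$, forcing $G\cong mK_{k+1}$). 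Your argument is self-contained and elementary, needs no external reference, and handles the imprimitive equality case transparently; what it gives up is the intermediate inequality $h(G)\le\av s+1$, which the paper reuses as a tool in \thref{thm:co-edge}, \thref{cor:sufficient}, \thref{cor:w<s+1} and \thref{cor:numtriangles}, and the connection to the average parameters that are the theme of the section (a connection you acknowledge parenthetically as an alternative route). The only point worth spelling out in a final write-up is that $\y_{\min}(\ol G)=\min(n-1-k,\,-1-\lambda_2)=-1-\lambda_2$ because the valency $n-1-k$ of $\ol G$ is nonnegative while $-1-\lambda_2<0$; your observation that $\lambda_2>-1$ for non-complete $G$ is exactly what is needed there.
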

\begin{proof}
By \thref{rmk:sameparameters}, we can apply the results of \cite{Neumaier}. In particular, by \cite[Theorem 2.14(a)]{Neumaier}, we have $\y_{\min}(G) \le \av \tau$ with equality if and only if $G$ is strongly regular. As noted in the proof of \cite[Theorem 2.30]{Neumaier}, by definition of $h(G)$ and $\av s$, this implies that
    \begin{equation}\label{eq:h<s+1}
        h(G) \le \av s+1,
    \end{equation}
    with equality if and only if $G$ is strongly regular. Next, by \thref{lem:(s+1)(ols+1)=n}, we have $(\av s+1)(\av {\ol s}+1)= n $. Applying (\ref{eq:h<s+1}) to both $G$ and $\ol G$ provides a proof for (\ref{eq:hhc<n}). If we have equality in (\ref{eq:hhc<n}), then by the above reasoning we must have equality in (\ref{eq:h<s+1}), from which it follows that $G$ is strongly regular.
\end{proof}

Alternative proofs of \thref{thm:characterization} are also possible. From \thref{lem:(s+1)(ols+1)=n} and (\ref{eq:h<s+1}) applied to $\ol G$ it follows that $\av s+1$ satisfies the ratio bound for cliques in $G$, see (\ref{eq:ratiobound}):
\begin{equation}\label{eq:s ratio}
    \av s+1 \le \frac n{h(\ol G)},
\end{equation}
with equality if and only if $G$ is strongly regular. This was also implicitly shown in the proof of \cite[Theorem 2.30]{Neumaier}. Now any choice of two from \thref{lem:(s+1)(ols+1)=n}, (\ref{eq:h<s+1}), and (\ref{eq:s ratio}) is sufficient to prove \thref{thm:characterization}.

We were recently pointed out by Roberson that \thref{thm:characterization} and the inequality \cite[Lemma 4.4]{abgraphs} can be shown to be equivalent, even though these results have completely different proofs.

\begin{rmk}\thlabel{rmk:Lovasz}
As mentioned before, the Lov\'{a}sz $\vartheta$-number is subject to inequalities that are related to the results discussed in this section. For regular graphs, the ratio bound (\ref{eq:ratiobound}) also holds for the Lovász $\vartheta$-number \cite[Theorem 9]{Lovasz}: if $G$ is regular, then
\begin{equation}\label{eq:Lovaszratio}
    \vartheta(G) \le \frac n {h(G)}.
\end{equation}
Considering a graph and its complement together, we have the following inequality \cite[Corollary 2]{Lovasz}:
\begin{equation}\label{eq:thetathetac>n}
    \vartheta(G)\vartheta(\ol G) \ge n.
\end{equation}
Combining (\ref{eq:Lovaszratio}) with either (\ref{eq:sandwich}) or (\ref{eq:thetathetac>n}), we get (\ref{eq:hhc<n}). Note that even though \cite{Lovasz} introduces sufficient conditions for equality in (\ref{eq:Lovaszratio}, \ref{eq:thetathetac>n}), necessary conditions are not given. Therefore, we cannot use the results of \cite{Lovasz} to characterize equality in (\ref{eq:hhc<n}) and prove \thref{thm:characterization}.
\end{rmk}

Note that we need the regularity requirement in \thref{thm:characterization}: take for example the star graph $G=K_{1,m}$ for $m\ge 2$. It is bipartite, so $h(G)=2$, and $\ol G$ is $K_m$ together with an isolated vertex, so $h(\ol G)=m$. The product of the Hoffman numbers is $2m$, which is greater than the number $m+1$ of vertices of $G$. The path graph on four vertices is bipartite and self-complementary, providing an irregular example for which $h(G)h(\ol G)=n$. The path graph on five vertices has $h(G)h(\ol G) \approx 4.48$, which is less than the number of vertices. In general, it seems hard to say anything about irregular graphs in this context.

\subsubsection*{A Hoffman-like bound for co-edge-regular graphs}

We can also use the above average parameters to give an improvement of the Hoffman bound for co-edge-regular graphs.

As mentioned in \thref{rmk:sameparameters}, the average parameters introduced in this section were already introduced in \cite{Neumaier}. Note however that the order in which the parameters are defined here is different from that in \cite{Neumaier}. In particular, $\av s$ can be defined using only $\av a$, as the positive root of
\begin{equation}\label{eq:s}
    (n+\av a -2k)x^2+(k^2-k+\av a -\av a n)x-k(n-k-1).
\end{equation}
For $a$-edge-regular graphs, we can therefore write $s$ instead of $\av s$. We have the following proposition, which was implicitly shown in \cite{NeumaierBound}, and explicitly in \cite{Neumaier}.
\begin{prop}[Neumaier bound, {\cite[Proposition 2.3]{Neumaier}}]\thlabel{prop:Neumaier}
    If $G$ is edge-regular, then
    $$\omega(G) \le s+1.$$
    A clique meets this bound if and only if it is regular.
\end{prop}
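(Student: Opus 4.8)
The plan is to bound the size $m$ of an arbitrary clique $C$ of $G$ (we may assume $2\le m\le n-1$, the claim being trivial for $m\le 1$ and $m=n$ being impossible since $G$ is not complete) by a double-counting argument on the partition $\{C,\,V(G)\setminus C\}$ combined with one convexity step. For $v\notin C$ set $x_v:=|N(v)\cap C|$. Counting edges between $C$ and its complement gives
\[
\sum_{v\notin C}x_v=m(k-m+1),
\]
since each vertex of $C$ has $m-1$ of its $k$ neighbours in $C$. Counting the triples $(u,w,v)$ with $u,w\in C$ distinct, $v\notin C$, and $v$ adjacent to both $u$ and $w$ gives
\[
\sum_{v\notin C}x_v(x_v-1)=m(m-1)(a-m+2),
\]
because $C$ is a clique, so any two of its vertices are adjacent and hence have exactly $a$ common neighbours, of which $m-2$ lie in $C$, leaving $a-m+2$ outside $C$.

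Since $|V(G)\setminus C|=n-m$, the Cauchy--Schwarz inequality gives $\sum_{v\notin C}x_v^2\ge\frac{1}{n-m}\bigl(\sum_{v\notin C}x_v\bigr)^2$, with equality if and only if all the $x_v$ are equal, i.e.\ if and only if $C$ is a regular clique. Writing $\sum x_v^2=\sum x_v(x_v-1)+\sum x_v$ and inserting the two counts turns this into a single quadratic inequality in $m$. The one genuine computation (where I expect most of the work) is to verify that, after clearing the denominator $n-m$, this inequality is exactly $p(m-1)\le 0$, where $p$ is the quadratic in (\ref{eq:s}) (using $\av a=a$, valid since $G$ is edge-regular), whose positive root is $s$. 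A convenient way to organise this is to pass first, via $\av c(n-k-1)=k(k-a-1)$ (\thref{lem:average}(i)), to the equivalent form $g\bigl(-k/(m-1)\bigr)\ge 0$ with $g$ the polynomial (\ref{eq:avtheta}), and then compare term by term.

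It remains to deduce $m-1\le s$ from $p(m-1)\le 0$. The constant term of $p$ equals $-k(n-k-1)<0$ (as $G$ is neither empty nor complete), while the leading coefficient $n+a-2k$ equals the number of vertices adjacent to neither endpoint of a fixed edge of $G$, hence is $\ge 0$. If $n+a-2k>0$, the two roots of $p$ have opposite signs, so the non-positive one is strictly below $m-1\ge 1$, and $p(m-1)\le 0$ forces $m-1\le s$; if $n+a-2k=0$, then $p$ is linear with positive root $s=k/(n-k)$ and the same conclusion is immediate. Thus $\omega(G)\le s+1$. Reading the chain of (in)equalities backwards, a clique satisfies $m=s+1$ precisely when $p(m-1)=0$, i.e.\ precisely when the Cauchy--Schwarz step is an equality, i.e.\ precisely when $C$ is a regular clique. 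The main obstacle is only the bookkeeping in identifying the combined double-counting inequality with $p(m-1)\le 0$; the rest is soft.
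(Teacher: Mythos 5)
The paper does not prove this proposition at all: it is quoted verbatim from \cite[Proposition 2.3]{Neumaier} (and attributed implicitly to \cite{NeumaierBound}), so there is no in-paper argument to compare against. Your proof is correct and self-contained, and it is essentially the standard counting argument behind the Neumaier/regular-clique bound. I checked the one computation you flagged: with $q=m-1$, the Cauchy--Schwarz inequality combined with your two counts reads $(n-q-1)(qa-q^2+k)\ge (q+1)(k-q)^2$, and expanding gives exactly $-\bigl[(n+a-2k)q^2+(k^2-k+a-an)q-k(n-k-1)\bigr]\ge 0$, i.e.\ $p(q)\le 0$ with $p$ the polynomial of (\ref{eq:s}) specialized to $\av a=a$; so the identification is as claimed. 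Your sign analysis of $p$ (constant term $-k(n-k-1)<0$, leading coefficient $n-2k+a\ge 0$ being the number of vertices adjacent to neither endpoint of an edge) correctly yields $m-1\le s$ in both the quadratic and the degenerate linear case, and the equality chain correctly reduces tightness to equality in Cauchy--Schwarz, i.e.\ to regularity of the clique. The standing hypothesis that $G$ is neither empty nor complete, which you invoke, matches the blanket assumption the paper makes in Section \ref{sec:characterization} where $s$ is defined. The only cosmetic remark is that the ``if'' direction of the equality statement implicitly excludes cliques of size at most $1$, but such cliques cannot be regular in a non-empty, non-complete regular graph, so nothing is lost.
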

By (\ref{eq:s ratio}), this bound is a strict improvement of the ratio bound (\ref{eq:ratiobound}), except for strongly regular graphs for which the Neumaier bound and the ratio bound (\ref{eq:ratiobound}) are equivalent, in which case it was called the Delsarte bound.

A clique attaining the Neumaier bound is called a \emph{Neumaier clique}, and is $\av \a$-regular, where $\av \a = s-\av \theta$ (compare \thref{rmk:alpha}). An edge-regular graph with a Neumaier clique is called a \emph{Neumaier graph}. If a Neumaier graph is not strongly regular, then it is called \emph{strictly Neumaier}. Research is currently being done on constructing and classifying strictly Neumaier graphs, see for example \cite{StrictlyNeumaier1, StrictlyNeumaier2, Neumaier, StrictlyNeumaier3, StrictlyNeumaier4, StrictlyNeumaier5}. Similarly as how Hoffman's bound on the chromatic number follows from (\ref{eq:classic}) and the ratio bound (\ref{eq:ratiobound}), we obtain a bound on the chromatic number from the Neumaier bound.

\begin{thm}\thlabel{thm:co-edge}
    If $G$ is co-edge-regular, then $\chi(G) \ge \av s +1$.
\end{thm}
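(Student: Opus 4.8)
The plan is to follow exactly the route by which Hoffman's bound is obtained from the ratio bound in Section~\ref{sec:preliminaries}, using the classical inequality (\ref{eq:classic}) together with a clique bound for cocliques of $G$; the only change is that the clique bound will be the Neumaier bound (\thref{prop:Neumaier}) applied to the complement, rather than the ratio bound. First I would record that, since $G$ is co-edge-regular and (by the standing assumption of this section) neither empty nor complete, its number $\av c$ of common neighbours of a non-adjacent pair is a genuine constant, and hence $\ol G$ is \emph{edge}-regular; concretely, $\av{\ol a}=n-2-2k+\av c$ is constant by \thref{lem:average}(ii). In particular, because $\ol G$ is edge-regular, the averaging in the definition of its parameter $\av{\ol s}$ is vacuous, so via (\ref{eq:s}) the quantity $\av{\ol s}$ coincides with the Neumaier parameter $s$ appearing in \thref{prop:Neumaier} when that proposition is applied to $\ol G$.

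Next I would apply (\ref{eq:classic}) to $G$, giving $\chi(G)\ge n/\a(G)$, and use that a coclique of $G$ is a clique of $\ol G$, so $\a(G)=\omega(\ol G)$. The Neumaier bound (\thref{prop:Neumaier}) applied to the edge-regular graph $\ol G$ then yields $\omega(\ol G)\le \av{\ol s}+1$. Combining this with \thref{lem:(s+1)(ols+1)=n}, which gives $(\av s+1)(\av{\ol s}+1)=n$, i.e. $\av{\ol s}+1=n/(\av s+1)$, I would conclude
\begin{equation*}
    \chi(G)\ \ge\ \frac{n}{\a(G)}\ =\ \frac{n}{\omega(\ol G)}\ \ge\ \frac{n}{\av{\ol s}+1}\ =\ \av s+1 ,
\end{equation*}
which is the assertion.

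The main point requiring care is the bookkeeping in the first step: one must verify that the ``$s$'' in \thref{prop:Neumaier}, stated there for an arbitrary edge-regular graph, is literally the quantity $\av{\ol s}$ defined in this section when the graph in question is $\ol G$. This is precisely where edge-regularity of $\ol G$ is essential — it makes $\av{\ol a}$ constant, so the average-parameter definition collapses to the honest Neumaier parameter, and (\ref{eq:s}) produces the same number in both settings. Beyond this matching of conventions there is no real obstacle: the argument is just the concatenation of (\ref{eq:classic}), \thref{prop:Neumaier} applied to $\ol G$, and \thref{lem:(s+1)(ols+1)=n}. (If desired, one can additionally note from this chain that equality forces equality in the Neumaier bound for $\ol G$, i.e. $\ol G$ has a regular clique, equivalently $G$ has a suitable regular coclique.)
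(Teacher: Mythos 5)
Your argument is correct and is essentially identical to the paper's proof: both apply the Neumaier bound to the edge-regular complement $\ol G$ to bound $\a(G)=\omega(\ol G)\le \av{\ol s}+1$, then combine this with (\ref{eq:classic}) and \thref{lem:(s+1)(ols+1)=n}. Your extra care in checking that $\av{\ol s}$ coincides with the Neumaier parameter of $\ol G$ is a reasonable elaboration of what the paper leaves implicit.
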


\begin{proof}
    Since $\ol G$ is edge-regular, we can apply the Neumaier bound (\thref{prop:Neumaier}) to get $\a(G) \le \ol s+1$. Together with (\ref{eq:classic}), \thref{lem:(s+1)(ols+1)=n} now gives the result.
\end{proof}

Note that by (\ref{eq:h<s+1}), the bound given in \thref{thm:co-edge} is a strict improvement of Hoffman's bound for co-edge-regular graphs that are not strongly regular.

\subsubsection*{Applications to Hoffman colorability}

We can apply the results from this section to Hoffman colorings of regular graphs. We derive three results (\thref{cor:sufficient}, \thref{cor:w<s+1}, \thref{cor:numtriangles}), each of which can be seen as a necessary condition for Hoffman colorability of regular graphs, or as a characterization of strongly regular graphs among Hoffman colorable regular graphs.

Firstly, we present four sufficient conditions for a Hoffman colorable regular graph to be strongly regular.

\begin{cor}\thlabel{cor:sufficient}
    Let $G$ be a Hoffman colorable regular graph. Then $G$ is strongly regular if any of the following holds:
    \begin{enumerate}[label=(\roman*)]
        \item $G$ is co-edge-regular,
        \item $G$ is a Neumaier graph,
        \item $G$ has a Hoffman clique,
        \item $\ol G$ is Hoffman colorable.
    \end{enumerate}
\end{cor}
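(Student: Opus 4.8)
The plan is to show that each of the four hypotheses forces equality $h(G)h(\ol G)=n$, after which \thref{thm:characterization} immediately yields that $G$ is strongly regular. Since $G$ is Hoffman colorable, we already know $h(G)=\chi(G)$; by \thref{prop:constantequitable} the color classes of a Hoffman coloring are Hoffman cocliques, all of the same size $n/\chi(G) = n/h(G)$, so that $\alpha(G) \ge n/h(G)$, and in fact (using \eqref{eq:classic}) $\alpha(G) = n/h(G)$. Dually, if $\ol G$ is Hoffman colorable then $\omega(G) = \alpha(\ol G) = n/h(\ol G)$. So the recurring idea is to pin down $\alpha(G)$ and $\omega(G)$ and play them against \thref{lem:(s+1)(ols+1)=n} and the inequalities \eqref{eq:h<s+1}, \eqref{eq:s ratio}.

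For part (i): if $G$ is co-edge-regular, then \thref{thm:co-edge} gives $\chi(G) \ge \av s+1$, while \eqref{eq:h<s+1} gives $h(G) \le \av s+1$; since $h(G)=\chi(G)$ these force $h(G)=\av s+1$, which by \thref{thm:characterization}'s ingredient \eqref{eq:h<s+1} (equality case) means $G$ is strongly regular. (Equivalently, since $\av s+1 = n/h(\ol G)$ by \eqref{eq:s ratio} combined with \thref{lem:(s+1)(ols+1)=n}, wait — one must be careful: the cleanest route is simply $h(G)=\av s+1 \Leftrightarrow$ strongly regular, directly from the equality condition stated with \eqref{eq:h<s+1}.) For part (ii): a Neumaier graph is by definition edge-regular, hence co-edge-regular's complement — no: a Neumaier graph is edge-regular, so $\ol G$ is co-edge-regular; apply part (i) to $\ol G$ (which is Hoffman colorable iff $G$ is? — not in general). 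The safer argument for (ii): $G$ edge-regular with a Neumaier clique $C$ means $C$ is a regular clique meeting the Neumaier bound, so $\omega(G) \ge |C| = s+1$; but $\omega(G) = h(G) = \chi(G)$? no, $\omega(G) \le h(G) \le \chi(G)$ always? — only for strongly regular graphs is that chain known. Instead: $C$ regular clique of size $s+1$ gives, after Hoffman coloring, that $\chi(G) \ge$ something; the clean path is $\omega(G)\le \av s+1$? — I will instead argue via part (iii).

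For part (iii): a Hoffman clique $C$ is by definition a Delsarte-type clique, i.e. a clique meeting the ratio bound in $\ol G$, so $|C| = \alpha(\ol G)$ meets \eqref{eq:ratiobound} for $\ol G$; hence $\omega(G) = |C| = \frac{n\, \y_{\min}(\ol G)}{\y_{\min}(\ol G) - (n-k-1)}$, which one computes equals $n/h(\ol G)$. On the other hand $\omega(G) \le \av s + 1 = n/h(\ol G)$ is always true (that is exactly \eqref{eq:s ratio}, or rather $\omega(G)\le \av s+1 \le n/h(\ol G)$ — need $\av s+1 = n/h(\ol G)$, which is \eqref{eq:s ratio} without the inequality only when strongly regular...). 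The correct bound is the Neumaier bound $\omega(G)\le \av s+1$ applied... but $G$ need not be edge-regular. The right tool: the ratio bound for cliques, which says $\omega(G) \le n/h(\ol G)$ for regular $G$ (this is \eqref{eq:ratiobound} applied to $\ol G$); combined with $\omega(G) = |C| = n/h(\ol G)$ from $C$ being a Hoffman clique, equality holds in the ratio bound for $\ol G$, i.e. $C$ is a regular clique, which together with... hmm. At this point the slickest finish for (iii) is: a Hoffman clique in $G$ is a Hoffman coclique in $\ol G$; a Hoffman coloring of $G$ gives Hoffman cocliques in $G$; so $\alpha(G)\alpha(\ol G) = (n/h(G))(n/h(\ol G))$ and also $\alpha(G)\alpha(\ol G)\le n$ by the clique–coclique bound, forcing $h(G)h(\ol G)\ge n$, hence $=n$ by \thref{thm:characterization}. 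Then \thref{thm:characterization} gives strong regularity. Part (iv) is then immediate: if $\ol G$ is Hoffman colorable too, then $\alpha(\ol G) = n/h(\ol G)$ and $\omega(G) = \alpha(\ol G)$, and combining $\omega(G)\cdot\alpha(G) \le n$ (clique–coclique bound) with $\alpha(G) = n/h(G)$ gives $h(G)h(\ol G) \ge n$, hence equality, hence strongly regular. And (ii) follows from (iii) since a Neumaier clique of size $s+1$, after one checks $s+1 = n/h(\ol G)$ via \eqref{eq:s ratio}, is in particular a Hoffman clique — or more directly, a Neumaier graph's Neumaier clique is regular and $G$ is regular, so it is a Hoffman clique, reducing (ii) to (iii).

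The main obstacle I anticipate is bookkeeping rather than any deep difficulty: keeping straight which of the chain $\omega \le h \le \chi$ and its dual are valid without assuming strong regularity (they are not, in general), and instead routing every argument through the clique–coclique bound $\omega(G)\alpha(G) \le n$ together with the exact values $\alpha(G) = n/h(G)$ (from Hoffman colorability plus \thref{prop:constantequitable}) and the relevant identification of Hoffman/Neumaier cliques with cocliques meeting the ratio bound — and then invoking \thref{thm:characterization} for the converse direction that upgrades $h(G)h(\ol G)\ge n$ to strong regularity. A secondary subtlety is making sure \thref{thm:characterization} is applicable (it requires $G$ regular, which is assumed, and neither empty nor complete, which follows since a complete graph's Hoffman coloring is trivial and the statement is vacuous/handled separately).
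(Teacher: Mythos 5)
Your part (i) is exactly the paper's argument and is fine: $\chi(G)=h(G)\le \av s+1$ by (\ref{eq:h<s+1}) and $\chi(G)\ge \av s+1$ by \thref{thm:co-edge}, so equality holds in (\ref{eq:h<s+1}) and $G$ is strongly regular. The problems are in (ii)--(iv). For (ii), the reduction to (iii) via ``a Neumaier clique is in particular a Hoffman clique'' is circular: a Hoffman clique has size $n/h(\ol G)$, a Neumaier clique has size $\av s+1$, and by (\ref{eq:s ratio}) these coincide precisely when $G$ is strongly regular, which is what you are trying to prove. (Your alternative justification, that any regular clique in a regular graph is a Hoffman clique, is false: an $e$-regular coclique attains the ratio bound only for the specific value $e=-\y_{\min}$, and the regularity parameter of a Neumaier clique is the right one exactly in the strongly regular case.) The direct route you considered and abandoned is the correct one and is what the paper does: $\chi(G)\ge\omega(G)\ge \av s+1$ since the Neumaier clique has size $\av s+1$, and then conclude as in (i).

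For (iii) and (iv), your ``slickest finish'' invokes the clique--coclique bound $\a(G)\,\omega(G)\le n$, which is a theorem for vertex-transitive graphs but fails for general regular graphs (e.g.\ there is a cubic graph on $8$ vertices containing both a triangle and an independent set of size $3$, giving $\a\omega\ge 9>8$); since \thref{cor:sufficient} only assumes regularity, this step is a genuine gap. It is, however, an unnecessary detour: with $h(G)=\chi(G)$ and $\omega(G)=n/h(\ol G)$ (the size of a Hoffman clique), the trivial inequality $\chi(G)\ge\omega(G)$ already gives $h(G)h(\ol G)\ge n$, whence equality and strong regularity by \thref{thm:characterization}. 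The paper instead routes (iii) through (\ref{eq:s ratio}) ($\omega(G)\ge n/h(\ol G)\ge \av s+1$, then conclude as in (i)); either repair works. Your reduction of (iv) to (iii) via \thref{prop:constantequitable} (color classes of a Hoffman coloring of $\ol G$ are Hoffman cliques of $G$) is correct and matches the paper. Finally, note that your global plan of forcing $h(G)h(\ol G)=n$ does not actually work for part (i) --- there \thref{thm:co-edge} gives a lower bound on $\chi(G)=h(G)$ but nothing forcing $h(\ol G)$ to be large --- so you are right to handle (i) through the equality case of (\ref{eq:h<s+1}) instead; the paper uses that single mechanism uniformly for all four parts.
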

\begin{proof}
   Since $G$ is Hoffman colorable, by (\ref{eq:h<s+1}) it suffices to show that $\chi(G)\ge \av s+1$. Now Part (i) follows from \thref{thm:co-edge}. A Neumaier clique is of size $\av s+1$, so for Neumaier graphs $\chi(G)\ge \av s+1$, implying Part (ii). Part (iii) follows similarly from (\ref{eq:s ratio}). Part (iv) follows from \thref{prop:constantequitable} and Part (iii).
\end{proof}

The first two conditions can be rephrased as follows: no strictly Neumaier graph is Hoffman colorable, and no co-edge-regular graph that is not strongly regular is Hoffman colorable.

\begin{exa}
Examples of graphs satisfying \thref{cor:sufficient}(iv) are the square lattice graphs $L(K_{m,m})$ and their complements (the columns form a spread in $L(K_{m,m})$, and the translates of the diagonal form a spread in the complement), and Paley graphs of a square order (the cosets of $\F_q$ in $\F_{q^2}$ give a spread in the Paley graph over $\F_{q^2}$, and since Paley graphs are self-complementary this gives also a Hoffman coloring), providing an example of a regular self-complementary Hoffman colorable graph.
\end{exa}

Secondly, we can derive that the Neumaier bound holds not only for edge-regular graphs, but also for Hoffman colorable regular graphs.

\begin{cor}\thlabel{cor:w<s+1}
    If $G$ is a Hoffman colorable regular graph, then $$\omega(G) \le  \av s+1,$$ with equality if and only if $G$ is strongly regular and has a Delsarte clique.
\end{cor}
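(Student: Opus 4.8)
The plan is to obtain the inequality by concatenating three facts. First, the elementary bound $\omega(G)\le\chi(G)$ holds for every graph. Second, Hoffman colorability of $G$ means $\chi(G)=h(G)$. Third, inequality (\ref{eq:h<s+1}) gives $h(G)\le\av s+1$. Chaining these yields $\omega(G)\le h(G)\le\av s+1$, which is the claimed bound; this is precisely the regular-graph analogue of the chain $\omega\le h\le\chi$ from (\ref{eq:w<h<chi}), with $\av s+1$ inserted on the right in place of $\chi$. Note also that it recovers, and extends from edge-regular graphs to Hoffman colorable regular graphs, the Neumaier bound of \thref{prop:Neumaier}.

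For the equality statement I would first handle the forward direction. If $\omega(G)=\av s+1$, then every inequality in the chain $\omega(G)\le h(G)\le\av s+1$ is forced to be an equality. Equality $h(G)=\av s+1$ in (\ref{eq:h<s+1}) forces $G$ to be strongly regular by the equality condition recorded there (the same condition used in \thref{thm:characterization}, coming from \cite[Theorem 2.14(a)]{Neumaier}). Given that $G$ is now strongly regular, the Delsarte bound for cliques is exactly $h(G)$ by (\ref{eq:w<h<chi}), so a maximum clique of $G$ -- which has size $\omega(G)=h(G)$ -- meets the Delsarte bound and is therefore by definition a Delsarte clique.

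Conversely, suppose $G$ is strongly regular with a Delsarte clique $C$. For a strongly regular graph the average parameters $\av a,\av c$ coincide with the constants $a,c$, hence $\av\tau=\tau$, $\av\theta=\theta$, and $\av s=s=-k/\tau=h(G)-1$. A Delsarte clique meets the Delsarte bound $h(G)$, so $|C|=h(G)=s+1$ (cf.\ \thref{rmk:alpha}). Thus $\omega(G)\ge|C|=s+1=\av s+1$, and together with the inequality already proved this gives $\omega(G)=\av s+1$.

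I do not expect a real obstacle: the statement is essentially an assembly of facts established earlier. The only points needing care are the bookkeeping between the average parameter $\av s$ and the genuine parameter $s$ of a strongly regular graph, and the observation that in a strongly regular graph any clique of size $h(G)$ is automatically a Delsarte clique. The key nontrivial ingredient -- that equality in (\ref{eq:h<s+1}) characterizes strong regularity among regular graphs -- is already available.
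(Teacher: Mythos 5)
Your proposal is correct and follows essentially the same route as the paper: the chain $\omega(G)\le\chi(G)=h(G)\le \av s+1$ via (\ref{eq:h<s+1}), with equality forcing strong regularity through the equality condition of (\ref{eq:h<s+1}) and forcing a maximum clique to meet the Delsarte bound. The paper dismisses the converse direction as ``clear''; you spell it out, but the content is identical.
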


\begin{proof}
    We have $\omega(G)\le \chi(G) = h(G) \le \av s+1$, where the last part follows from (\ref{eq:h<s+1}). In case of equality, we have equality in (\ref{eq:h<s+1}) so $G$ is strongly regular, and we have $\omega(G)=h(G)$ so $G$ has a Delsarte clique. The other direction is clear.
\end{proof}

Note that even though equality in the Neumaier bound is not understood completely yet, \thref{cor:w<s+1} gives a comprehensive condition for equality. Note that we have equality in \thref{cor:w<s+1} if and only if $G$ is strongly regular and not a core, by \thref{thm:SRGcores}.

As a third and final application, we obtain the following bound on the number of triangles of a Hoffman colorable regular graph.

\begin{cor}\thlabel{cor:numtriangles}
    If $G$ is a Hoffman colorable regular graph with $N$ triangles, then
    $$N \ge  \frac{kn}{6} \Big (\chi -1 + \frac{(k-\chi) (k-\chi+1)}{n-\chi}-\frac{k(n-k-1)}{(n-\chi)(\chi-1)} \Big ),$$
    with equality if and only if $G$ is strongly regular.
\end{cor}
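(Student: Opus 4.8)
The plan is to reduce \thref{cor:numtriangles} to a single inequality on the average parameter $\av a$. Counting in two ways the pairs consisting of an edge $e$ together with a vertex adjacent to both endpoints of $e$ — once as $3N$ (each triangle contains three edges) and once as $\frac{nk}{2}\cdot\av a$ (there are $nk/2$ edges, and by definition the average number of common neighbours of the two endpoints of an edge equals $\av a$) — gives $N=\frac{nk}{6}\av a$. So it suffices to prove
\[
\av a \;\ge\; \chi-1+\frac{(k-\chi)(k-\chi+1)}{n-\chi}-\frac{k(n-k-1)}{(n-\chi)(\chi-1)},
\]
with equality if and only if $G$ is strongly regular.

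Next I would bring in Hoffman colorability. Since $G$ is Hoffman colorable, $\chi(G)=h(G)$, and (\ref{eq:h<s+1}) gives $h(G)\le\av s+1$; hence $\av s\ge\chi-1$, with equality exactly when $G$ is strongly regular. As $\av s=-k/\av\tau$ and $\chi\ge 2$ (because $G$ is not empty), this is equivalent to $\av\tau\ge x_0$, where $x_0=-k/(\chi-1)<0$. The parameter $\av\tau$ is the smaller root of the monic quadratic $g(x)=x^2+(\av c-\av a)x+(\av c-k)$ from (\ref{eq:avtheta}), so $g$ is nonnegative on $(-\infty,\av\tau]$; since $x_0\le\av\tau$, we get $g(x_0)\ge 0$, and this is an equality precisely when $x_0=\av\tau$, i.e. precisely when $G$ is strongly regular.

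It then remains to unwind $g(x_0)\ge 0$. Substituting $\av c=\frac{k(k-\av a-1)}{n-k-1}$ — which is \thref{lem:average}(i), valid since $n-k-1>0$ as $G$ is not complete — and clearing the positive factors $(\chi-1)^2$, $(n-k-1)/k$, and $(\chi-1)(n-\chi)$ (the last positive because $2\le\chi\le n-1$, as $G$ is neither empty nor complete), the inequality $g(x_0)\ge 0$ becomes, after collecting the coefficient of $\av a$ (which works out to $(\chi-1)(n-\chi)$),
\[
\av a \;\ge\; \frac{(n-2k)(\chi-1)^2+k(k-1)(\chi-1)-k(n-k-1)}{(\chi-1)(n-\chi)},
\]
and a routine expansion identifies the right-hand side with the claimed lower bound. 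Combined with $N=\frac{nk}{6}\av a$ this yields the stated bound. Since $g(x_0)\ge 0$ is the only inequality used and the subsequent steps only multiply by positive quantities, equality in \thref{cor:numtriangles} holds exactly when $g(x_0)=0$, i.e. when $G$ is strongly regular; conversely all steps are equalities for strongly regular $G$, using $\av a=a$ and $\av s=s=h-1=\chi-1$. Conceptually the only idea is ``evaluate the average quadratic $g$ at $-k/(\chi-1)$ and use $\av s\ge\chi-1$''; I expect the main obstacle to be purely computational, namely carrying the inequality correctly through the substitution and the clearing of denominators (in particular confirming that the coefficient $(\chi-1)(n-\chi)$ of $\av a$ is positive, so the inequality is not reversed) and checking the final algebraic identity against the stated formula.
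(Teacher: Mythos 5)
Your proposal is correct and follows essentially the same route as the paper: the identity $3N=\tfrac{nk}{2}\av a$ by double counting, the inequality $\chi-1\le\av s$ from (\ref{eq:h<s+1}) with equality iff $G$ is strongly regular, and then an evaluation of a quadratic to isolate $\av a$. The only cosmetic difference is that the paper evaluates the polynomial (\ref{eq:s}) (already expressed in $\av s$ with $\av c$ eliminated) at $\chi-1$, whereas you evaluate $g$ from (\ref{eq:avtheta}) at $-k/(\chi-1)$ and eliminate $\av c$ afterwards; these are the same computation under the substitution $x\mapsto -k/x$, and your sign and positivity checks (in particular the coefficient $(\chi-1)(n-\chi)$ of $\av a$) are accurate.
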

\begin{proof}
    By double counting the pairs of incident edges and triangles, we get $3N=kn \av a/2$. By (\ref{eq:h<s+1}), we know for a Hoffman colorable regular graph that $\chi \le \av s +1$ with equality if and only if the graph is strongly regular. This means that $\chi-1 \le \av s$, and evaluating the polynomial from (\ref{eq:s}) at $\chi -1$ results in a non-positive number. The result now follows after isolating $\av a$ in this inequality.
\end{proof}

\subsection*{Acknowledgments}

Aida Abiad is supported by NWO (Dutch Research Council) through the grants VI.Vidi.213.085 and OCENW.KLEIN.475. The research of Thijs van Veluw is supported by the Special Research Fund of Ghent University through the grant BOF/24J/2023/047. The authors thank Willem Haemers for inspiring discussions on this topic, and David Roberson for pointing out the relation between \thref{thm:characterization} and one of his inequalities.

\end{document}